\title{Analytic Compactifications of $\cc^2$ part I - curvettes at infinity}
\author{Pinaki Mondal}
\subjclass[2010]{32C20, 14M27, 14J26}
\keywords{Compactifications of $\cc^2$, curvettes, polynomial automorphisms, discreet valuations}
\DeclareMathOperator{\sign}{sign}
\newcommand{\Rmnum}[1]{\expandafter\@slowromancap\romannumeral #1@}
\DeclareMathOperator\ord{ord}
\DeclareMathOperator\sing{Sing} 
\DeclareMathOperator\spec{Spec}
\newcommand{\scrE}{\ensuremath{\mathcal{E}}}
\newcommand{\scrI}{\ensuremath{\mathcal{I}}}
\newcommand{\scrK}{\ensuremath{\mathcal{K}}}
\newcommand{\scrM}{\ensuremath{\mathcal{M}}}
\newcommand{\cc}{\ensuremath{\mathbb{C}}}
\newcommand{\nn}{\ensuremath{\mathbb{N}}}
\newcommand{\pp}{\ensuremath{\mathbb{P}}}
\newcommand{\qq}{\ensuremath{\mathbb{Q}}}
\newcommand{\zz}{\ensuremath{\mathbb{Z}}}
\newcommand{\sheaf}{\ensuremath{\mathcal{O}}}
\newcommand{\im}{\ensuremath{\Rightarrow}}
\newcommand{\into}{\ensuremath{\hookrightarrow}}
\newtheorem{thm}{Theorem}[section]
\newtheorem*{thm*}{Theorem}
\newtheorem{lemma}[thm]{Lemma}
\newtheorem*{lemma*}{Lemma}
\newtheorem{prop}[thm]{Proposition}
\newtheorem*{prop*}{Proposition}
\newtheorem{cor}[thm]{Corollary}
\newtheorem*{claim*}{Claim}
\newtheorem*{conjecture*}{Conjecture}
\theoremstyle{definition} 
\newtheorem{algorithm}[thm]{Algorithm}
\newtheorem*{constrinition*}{Construction-Definition}
\newtheorem*{convention*}{Convention}
\newtheorem{defn}[thm]{Definition}
\newtheorem*{defn*}{Definition}
\newtheorem*{definotation*}{Definition-Notation}
\newtheorem{example}[thm]{Example}
\newtheorem*{example*}{Example}
\newtheorem*{fact*}{Fact}
\newtheorem*{facts*}{Facts}
\newtheorem{notation}[thm]{Notation}
\newtheorem*{bold-note*}{Note}
\newtheorem*{problem*}{Problem}
\newtheorem{bold-question}[thm]{Question}
\newtheorem*{bold-question*}{Question}
\newtheorem{rem}[thm]{Remark}
\newtheorem{reminition}[thm]{Remark-Definition}
\newtheorem*{reminition*}{Remark-Definition}
\newtheorem{remexample*}{Remark-Example}
\newtheorem{remtation}[thm]{Remark-Notation}
\newtheorem*{remtation*}{Remark-Notation}
\newtheorem*{remuestion*}{Remark-Question}
\theoremstyle{remark}
\newtheorem*{rem*}{Remark}
\newtheorem*{note*}{Note}
\newtheorem*{notation*}{Notation}
\newtheorem*{question*}{Question}
\newtheorem*{questions*}{Questions}
\newcounter{UnorderedProofTempCtr}
\newcommand{\tempcommand}{}
\newcommand{\hot}{\text{h.o.t.}}
\newcommand{\lot}{\text{l.o.t.}}
\newcommand{\sumption}{\eqref{assumption}}
\newcommand{\sigmaxy}{\sigma_{(x,y)}}
\newcommand{\Xxy}{\bar X_{(x,y)}}
\begin{document}
\maketitle

\begin{abstract} 
We study normal analytic compactifications of $\cc^2$ and describe their singularities and configuration of curves at infinity, in particular improving and generalizing results of \cite{brentonfication}. As a by product we give new proofs of Jung's theorem on polynomial automorphisms of $\cc^2$ and Remmert and Van de Ven's result that $\pp^2$ is the only smooth analytic compactification of $\cc^2$ for which the curve at infinity is irreducible. We also give a complete answer to the question of existence of compactifications of $\cc^2$ with prescribed divisorial valuations at infinity. In particular, we show that a valuation on $\cc(x,y)$ centered at infinity determines a compactification of $\cc^2$ iff it is {\em positively skewed} in the sense of \cite{favsson-tree}. \\

Nous \'etudions les compactifications analytiques normales de $\cc^2$ et d\'ecrire leurs singularit\'es et la configuration des courbes \`a l'infini, particulierment am\'elioratant et g\'en\'eralisant les r\'esultats de \cite{brentonfication}. Comme un sous-produit, nous donnons de nouvelles preuves du th\'eor\`eme de Jung sur automorphismes polynomiaux de $\cc^2 $ et le r\'esultat de Remmert et Van de Ven que $\pp^2$ est le seul compactifi\'e analytique lisse $\cc^2$ pour lequel la courbe \`a l'infini est irr\'eductible. Nous donnons aussi une r\'eponse compl\`ete \`a la question de l'existence de compactifications de $\cc^2 $ avec des valorisations divisoriels prescrites \`a l'infini. En particulier, nous montrons qu'une \'evaluation sur $\cc(x,y) $ centr\'ee \`a l'infini d\'etermine une compactification de $\cc^2$ ssi il est {\em positivement asym\'etrique} dans le sens de \cite{favsson-tree}.	
\end{abstract}

\section{Introduction} \label{sec-intro}
The topic of this article is compact normal analytic surfaces containing $\cc^2$, henceforth to be called simply {\em compactifications} (of $\cc^2$). Compactifications of $\cc^2$, being one of the most natural and simplest classes of compact surfaces, have been the subject of numerous articles, see e.g.\ \cite{remmert-van-de-ven}, \cite{morrow}, \cite{brentonfication}, \cite{brenton-singular}, \cite{brenton-graph-1}, \cite{miyanishi-zhang}, \cite{furushima}, \cite{ohta}, \cite{kojima}, \cite{koji-hashi}, \cite{favsson-dynamic}. In particular, Kodaira (as part of his classification of surfaces), and independently Morrow \cite{morrow} showed that every nonsingular compactification of $\cc^2$ is {\em rational} (i.e.\ bimeromorphic to $\pp^2$) and can be obtained from $\pp^2$ or some Hirzebruch surface via a sequence of blow-ups and blow-downs. In this article we initiate a program to study these compactifications via studying the {\em curvettes at infinity} - these are germs of curves which are transversal to a {\em curve at infinity} (i.e.\ a curve lying on the complement of $\cc^2$). We analyze parametrizations of images of these curvettes under the bimeromorphic correspondence to $\pp^2$ and use them in two different ways:
\begin{itemize}
\item To study singularities of the compactifications and of the curves at infinity (Sections \ref{primitive-section}, \ref{singular-section}).
\item To study existence of a compactification such that the orders of vanishing along curves at infinity is a prescribed collection of discrete valuations on $\cc(x,y)$ (Section \ref{inter-section}).    
\end{itemize}
In Part II \cite{sub2-2} of this article we use the tools developed here to completely classify compactifications of $\cc^2$ with one (irreducible) curve at infinity. In a subsequent work we plan to emulate this technique to study more general Moishezon surfaces (i.e.\ analytic surfaces which are bimeromorphic to algebraic surfaces).\\

Our first main result is a description of singularities of compactifications of $\cc^2$ and configuration of the curves at infinity. We call a compactification {\em minimal} if none of the irreducible components of the curve at infinity can be (analytically) contracted\footnote{Note that a minimal compactification of $\cc^2$ may {\em not} be a minimal surface, see Example \ref{minimally-non-singular}}.

\begin{thm} \label{singular-theorem}
Let $\bar X$ be a normal analytic compactification of $\cc^2$. Assume that $\bar X \setminus \cc^2$ has $k$ irreducible components $C_1, \ldots, C_k$. Let $\sing(\bar X)$ be the set of singular points of $\bar X$.
\begin{enumerate}[(1)]
\item \label{non-minimal-bound} $|\sing(\bar X)| \leq 2k$.
\item \label{non-minimal-types} $\bar X$ has at most one singular point which is not sandwiched\footnote{An analytic surface $Y$ has a {\em sandwiched} singularity at a point $P$ if there are proper bimeromorphic maps $U'' \to U \to U'$ where $U$ is a neighborhood of $P$ in $Y$ and $U',U''$ are (open subsets of) non-singular surfaces \cite[Remark 1.12]{spinash}. Sandwiched singularities are {\em rational} \cite[Proposition 1.2]{lipman}.}.
\item \label{curve-singularity}
\begin{enumerate}
\item \label{curve-singularity-1} For each $j$, $1 \leq j \leq k$, $C_j$ has an open set isomorphic to $\cc$; in particular, it has at most one singular point. 
\item \label{curve-singularity-2} 
There is at most one $j$ such that $C_j$ has a singular point which is not in $C_i \cap C_j$ for some $i \neq j$. Moreover, if $Q$ is such a point on $C_j$, then $\bar X$ is also singular at $Q$ and $\bigcup_{i\neq j}C_i$ is contractible; in particular, if in addition $k \geq 2$, then $\bar X$ is {\em not} minimal.
\end{enumerate} 
\item \label{minimal-assertion} Assume $\bar X$ is a minimal compactification of $\cc^2$. Then $|\sing(\bar X)| \leq k +1$. Moreover, there is a point $P \in \bar X$ such that
\begin{enumerate}
\item \label{minimal-configuration} $C_i \cap C_j = \{P\}$ for all $i,j$, $1 \leq i < j \leq k$.
\item \label{minimal-C-P} $C_i \setminus \{P\} \cong \cc$ for each $i$.  
\item \label{minimal-singularities} $\left|\sing(\bar X) \setminus \{P\}\right| \leq k$.
\item \label{minimal-quotient} every point in $\sing(\bar X) \setminus \{P\}$ is a cyclic quotient singularity.
\end{enumerate}
\end{enumerate} 
\end{thm}

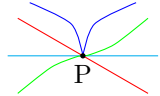
\begin{figure}[htp]
\begin{center}
\begin{tikzpicture}

	\draw [blue] plot [smooth] coordinates {({-cos(45)},{sin(45)}) ({-0.45*cos(70)}, {0.45*sin(70)}) (0, 0) };
	\draw [blue] plot [smooth] coordinates {({cos(45)},{sin(45)}) ({0.45*cos(70)}, {0.45*sin(70)}) (0, 0) };
	\draw [cyan] (-1,0) -- (1,0);
	\draw [green] plot [smooth] coordinates {({-cos(30)},{-sin(30)}) ({-0.45*cos(20)}, {-0.45*sin(20)}) ({-0.1*cos(10)}, {-0.1*sin(10)}) (0, 0) };
	\draw [green] plot [smooth] coordinates {({cos(30)},{sin(30)}) ({0.45*cos(20)}, {0.45*sin(20)}) ({0.1*cos(10)}, {0.1*sin(10)}) (0,0) };
	\draw [red] (-{cos(30)},{sin(30)}) -- ({cos(30)},-{sin(30)});
	
	\fill [] (0,0) circle (1pt);
	 \draw (0,0 )  node [below] {P};

\end{tikzpicture}
\caption{Configuration of curves at infinity on a minimal compactification} \label{fig:minimal-configuration}
\end{center}
\end{figure}

\begin{rem}
\mbox{}
\begin{enumerate}[(a)]
\item Both of the upper bounds for $|\sing(\bar X)|$ of Theorem \ref{singular-theorem} are sharp (see Examples \ref{sharp-example-1} and \ref{sharp-example-2}). Example \ref{minimally-non-singular} shows that the lower bound for $|\sing(\bar X)|$ in both cases is zero.
\item Let $Q$ be a singular point of some $C_j$. Assertion \eqref{curve-singularity-1} implies that $C_j$ has a {\em totally extraordinary singularity} at $Q$ in the language of \cite{brentonfication}. Consequently, assertion \eqref{curve-singularity} improves and generalizes the main result of \cite{brentonfication}. 
\end{enumerate}
\end{rem}

We prove Theorem \ref{singular-theorem} essentially via combinatorial arguments stemming from a careful study of the dual graphs of resolution of singularities of compactifications of $\cc^2$ \footnote{Except for assertion \eqref{curve-singularity-2}, the proof of all assertions of Theorem \ref{singular-theorem} requires only the background material presented in Section \ref{surfacection}. The proof of assertion \eqref{curve-singularity-2} uses Corollary \ref{primitive-corollary} which in turn uses Lemma \ref{normal-lemma}.}. 
The resolution of singularities of a compactification of $\cc^2$ is on the other hand intimately related to the resolution of singularities of generic curvettes at infinity associated to each irreducible curve at infinity. A study of this relation leads us to the second main result (Theorem \ref{priminimal-resolution-graph}) in which we give an explicit description of the dual graph of minimal resolution of singularities of compactifications of $\cc^2$ which are {\em primitive}, i.e.\ for which the curve at infinity is irreducible. As a by product of this description we give new proofs of Jung's theorem on polynomial automorphisms of $\cc^2$ (Corollary \ref{jung}), and Remmert and Van de Ven's result that $\pp^2$ is the only smooth analytic compactification of $\cc^2$ for which the curve at infinity is irreducible (Corollary \ref{rem-de-ven}).\\
%
%
%

A motivation for the work on this article was to understand divisorial valuations centered at infinity on $\cc[x,y]$ - each of these is the order of vanishing along some curve at infinity on some compactification of $\cc^2$. However, these valuations can be explicitly described {\em without} resorting to any compactification, e.g.\ by a finite {\em generating sequence} \cite{spivaluations} of polynomials, or a (finite) sequence of {\em key polynomials} \cite{maclane-key}, or by a {\em Puiseux polynomial} (i.e.\ a Puiseux series with finitely many terms) in $x^{-1}$ or $y^{-1}$ \cite[Chapter 4]{favsson-tree}. The most basic question in this context is:
\begin{bold-question} \label{existential-question}
Assume that we have explicit algebraic description (e.g.\ in one of the equivalent ways mentioned above) of divisorial valuations $\nu_1, \ldots, \nu_k$ on $\cc[x,y]$; in other words, assume that for all polynomials $f \in \cc[x,y]$, we have explicit recipes to compute $\nu_j(f)$, $1 \leq j \leq k$. Determine if there exists a compactification $\bar X$ of $\cc^2$ such that the $\nu_j$'s are precisely the order of vanishing along the curves at infinity on $\bar X$.  
\end{bold-question}
Question \ref{existential-question} is about the existence of a geometric `model' underlying some algebraic data. It follows that the answer should involve interpretation of relevant geometric objects in terms of the input data. Indeed, if $\nu$ is a divisorial valuation on $\cc[x,y]$ associated to a curve $C$ at infinity on some compactification $\bar X$ of $\cc^2$, then the key polynomials of $\nu$ can be used to define `natural' representatives of generic curvettes at infinity associated to $C$ (see Remark \ref{mremark}). Combining this observation with Grauert's characterization of contractible curves (\cite{grauert}, see Theorem \ref{greorem}) we give a complete and explicit answer to Question \ref{existential-question}. Here we give a formulation of this answer in terms of the sequence of key polynomials:\\

Given $\nu_j$'s as in Question \ref{existential-question}, we may (by a generic linear change of coordinates) choose coordinates $(x,y)$ such that $\nu_j(x) < 0$ and $\nu_j(x) \leq \nu_j(y)$ for each $j$. Then set $(u,v) := (1/x,y/x)$, so that each $\nu_j$ is non-negative on $\cc[u,v]$ (with $\nu_j(u) > 0$), and therefore each can be described by a finite sequence of key polynomials. Let $\tilde g_{j,0} = u,\ \tilde g_{j,1} = v, \tilde g_{j,2}, \ldots, \tilde g_{j,l_j} \in \cc[u,v]$ be the sequence of key polynomials of $\nu_j$ (or a {\em minimal generating sequence} in the terminology of \cite{spivaluations}) with respect to $(u,v)$-coordinates. Pick the smallest positive integer $n_{j,l_j}$ such that $n_{j,l_j} \nu_j(\tilde g_{j,l_j})$ is in the semigroup generated by $\nu_j(\tilde g_{j,s})$, $1 \leq s \leq l_j-1$. Then it follows from the property of key polynomials that $n_{j,l_j} \nu_j(\tilde g_{j,l_j}) = \sum_{s=0}^{l_j-1}n_{j,s}\nu_j(\tilde g_{j,s})$ where $n_{j,s}$ are non-negative integers such that $n_{j,s} < \deg_v(\tilde g_{j,s+1})/\deg_v(\tilde g_{j,s})$ for $1 \leq s \leq l_j -1$. Let $\scrM$ be the matrix with entries 
\begin{align*}
m_{ij} 
	&=  d_jn_{j,l_j}\nu_i(u) -\min\left\{ n_{j,l_j}\nu_i(\tilde g_{j,l_j}), \sum_{s=0}^{l_j-1}n_{j,s}\nu_i(\tilde g_{j,s})\right\}   \\
\end{align*}
where $d_j = \deg_v(\tilde g_{j,l_j})$.

\begin{thm} \label{determinanthm}
The answer to question \ref{existential-question} is affirmative iff $\det (-\scrM) < 0$. 
\end{thm}

In the special case that $k=1$, Theorem \ref{determinanthm} implies that a valuation $\nu$ (centered at infinity on $\cc[x,y]$) determines a compactification of $\cc^2$ iff it is {\em positively skewed} in the sense of \cite{favsson-tree}. As the first step to the proof of Theorem \ref{determinanthm} we study a special case of Question \ref{existential-question}, where the answer is affirmative and the resulting compactification {\em dominates} $\pp^2$:

\begin{thm} \label{intersection-thm}
Assume $\nu_1 = -\deg$, where $\deg$ is the degree in $(x,y)$ coordinates. Also assume (w.l.o.g.) that $\nu_i$'s are mutually non-proportional. Then 
\begin{enumerate}
\item \label{easy-existence} There exists a projective (in particular, algebraic) compactification $\bar X$ of $\cc^2$ which affirmatively answers Question \ref{existential-question}. 
\item \label{easy-rational} The singular points of $\bar X$ (if they exist) are sandwiched.
\item \label{easy-intersection} The matrix of intersection numbers of the curves at infinity on $\bar X$ is $\scrM^{-1}$.
\end{enumerate}
\end{thm}

\begin{rem}[Interpretation of the matrix $\scrM$] \label{mremark}
Let $\xi$ be an indeterminate and define 
\begin{align*}
\tilde g_{\nu_j} &:= \tilde g_{j,l_j}^{n_{j,l_j}} - \xi \prod_{s=0}^{l_j-1}\tilde g_{j,s}^{n_{j,s}} \in \cc[u,v,\xi],\\
g_{\nu_j} &:= x^{\deg_v(\tilde g_{\nu_j})} \tilde g_{\nu_j}(1/x,y/x,\xi) \in \cc[x,x^{-1},y,\xi]
\end{align*}
Then it is straightforward to see that $m_{ij} = -\nu_i(g_{\nu_j}(x,y,\tilde \xi))$ for generic $\tilde \xi \in \cc$. Geometrically these $g_{\nu_j}(x,y,\tilde \xi)$'s define {\em generic curvettes at infinity} associated to $\nu_j$ (see Definition \ref{key-definition} and Proposition \ref{curvette-equation}). 
\end{rem}

\begin{rem}
Theorem \ref{intersection-thm} remains valid if $-\nu_1$ is any {\em weighted degree} corresponding to positive weights for $x$ and $y$, or even more generally, if $\nu_1$ is the divisorial valuation associated to the curve at infinity on any primitive compactification of $\cc^2$ with at worst {\em sandwiched} singularities. This follows from essentially the same arguments as in the proof of Theorem \ref{intersection-thm}. 
\end{rem}

\subsection{Organization}
After presenting some background material in Section \ref{background}, we introduce in Section \ref{curvettection} the notion of {\em generic curvettes at infinity} on $\cc^2$ associated to (irreducible) curves at infinity on compactifications on $\cc^2$. In Section \ref{primitive-section} we describe the dual graph of minimal resolution of singularities of primitive compactifications of $\cc^2$ and as corollaries prove Jung's theorem on polynomial automorphisms of $\cc^2$ (Corollary \ref{jung}), and Remmert and Van de Ven's result that $\pp^2$ is the only smooth primitive compactification of $\cc^2$ (Corollary \ref{rem-de-ven}). Section \ref{singular-section} contains the proof of Theorem \ref{singular-theorem} and Section \ref{inter-section} contains the proof of Theorems \ref{determinanthm} and \ref{intersection-thm}.

\subsection{(Un)convention}

In this article we make the unconventional choice to parametrize analytic curves as the parameter approaches infinity (as opposed to zero). We do this because it is more convenient for studying the behaviour of analytic curves on $\cc^2$ as they approach infinity, and studying how the `order of the growth' of these parametrizations is affected by change of coordinates on $\cc^2$. E.g.\ if $f \in \cc[x,y]$ and $L$ is the line $y = ax$, in order to measure the order of growth of $f|_L$ near infinity, we could say 
\begin{itemize}
\item either parametrize $L$ as $t \mapsto (t,at)$ as $t \to \infty$ and compute the degree in $t$ of $f(t,at)$, 
\item or parametrize $L$ as $t \mapsto (t^{-1},at^{-1})$ as $t \to 0$, compute the order in $t$ of $f(t^{-1},at^{-1})$, and take its negative.
\end{itemize}
In this article we chose to adopt the first approach. A consequence of this choice is that instead of using the usual Puiseux series (Definition \ref{meromorphic-defn}) in $t$ where terms appear with increasing order in $t$, we have to use series in $t$ in which terms appear with {\em decreasing} order in $t$; we call these {\em descending Puiseux series} (Definition \ref{dpuiseux}). As a justification of our choice, we invite the reader to formulate Lemma \ref{normal-lemma} (which is a crucial tool in our proof of the results of Section \ref{primitive-section}) using parametrization from a neighborhood of zero and usual Puiseux series, and to compare the resulting formulation with ours.

\subsection{Acknowledgements}

I heartily thank Professor Pierre Milman. This work was done while I was his post-doc at University of Toronto. It was essentially an attempt to understand some of his questions in a simple case and the exposition profited enormously from speaking in his weekly seminar and from his questions. Very special thanks also go to Dmitry Kerner - his questions {\em forced} me to think and formulate the results in geometric and much more understandable terms. Some of the results of this article were announced in \cite{announcement-2}.

\section{Background} \label{background}
\subsection{Puiseux series}

\begin{defn}[Meromorphic Puiseux series] \label{meromorphic-defn}
A {\em meromorphic Puiseux series} in a variable $u$ is a fractional power series of the form $\sum_{m \geq M} a_m u^{m/p}$ for some $m,M \in \zz$, $p  \geq 1$ and $a_m \in \cc$ for all $m \in \zz$. If all exponents of $u$ appearing in a meromorphic Puiseux series are positive, then it is simply called a {\em Puiseux series} (in $u$). Given a meromorphic Puiseux series $\phi(u)$ in $u$, write it in the following form:
\begin{align*} 
\phi(u) =  \cdots+ a_1 u^{\frac{q_1}{p_1}} + \cdots +  a_2 u^{\frac{q_2}{p_1p_2}} + \cdots 
	+ a_l u^{\frac{q_l}{p_1p_2 \cdots p_l}} + \cdots 
\end{align*}
where $q_1/p_1$ is the smallest non-integer exponent, and for each $k$, $1 \leq k \leq l$, we have that $a_k \neq 0$, $p_k \geq 2$, $\gcd(p_k, q_k) = 1$, and the exponents of all terms with order between $\frac{q_k}{p_1\cdots p_k}$ and $\frac{q_k}{p_1\cdots p_{k+1}}$ (or, if $k = l$, then all terms of order $> \frac{1}{p_1\cdots p_l}$) belong to $\frac{1}{p_1 \cdots p_{k}}\zz$. Then the pairs $(q_1, p_1), \ldots, (q_l, p_l)$, are called the {\em Puiseux pairs} of $\phi$ and the exponents $\frac{q_k}{p_1 \cdots p_k}$, $1 \leq k \leq l$, are called {\em characteristic exponents} of $\phi$. The {\em polydromy order} \cite[Chapter 1]{casas-alvero} of $\phi$ is $p := p_1\cdots p_l$, i.e.\ the polydromy order of $\phi$ is the smallest $p$ such that $\phi \in \cc((u^{1/p}))$. Let $\zeta$ be a primitive $p$-th root of unity. Then the {\em conjugates} of $\phi$ are
\begin{align*}
\phi_j(u) :=  \cdots+ a_1 \zeta^{jq_1p_2\cdots p_l} u^{\frac{q_1}{p_1}} + \cdots +  a_2 \zeta^{jq_2p_3\cdots p_l} u^{\frac{q_2}{p_1p_2}} + \cdots 
	+ a_l \zeta^{jq_l} u^{\frac{q_l}{p_1p_2 \cdots p_l}} + \cdots
\end{align*}
for $1 \leq j \leq p$ (i.e.\ $\phi_j$ is constructed by multiplying the coefficients of terms of $\phi$ with order $n/p$ by $\zeta^{jn}$).
\end{defn}

We use the standard fact that the field of meromorphic Puiseux series in $u$ is the algebraic closure of $\cc((u))$:

\begin{thm} \label{puiseux-thm}
Let $f \in \cc((u))[v]$ be an irreducible monic polynomial in $v$ of degree $d$. Then there exists a meromorphic Puiseux series $\phi(u)$ in $u$ of polydromy order $d$ such that 
\begin{align*}
f = \prod_{i=1}^d (v - \phi_i(u)),
\end{align*}
where $\phi_i$'s are conjugates of $\phi$. 
\end{thm}

\begin{defn}[descending Puiseux Series] \label{dpuiseux}
A {\em descending Puiseux series} in $x$ is a meromorphic Puiseux series in $x^{-1}$. The notions regarding meromorphic Puiseux series defined in Definition \ref{meromorphic-defn} extend naturally to the setting of descending Puiseux series. In particular, if $\phi(x)$ is a descending Puiseux series and the Puiseux pairs of $\phi(1/x)$ are $(q_1, p_1), \ldots, (q_l, p_l)$, then $\phi$ has Puiseux pairs $(-q_1, p_1), \ldots, (-q_l, p_l)$, polydromy order $p := p_1 \cdots p_l$, and characteristic exponents $-q_k/(p_1 \cdots p_k)$ for $1 \leq k \leq l$.   
\end{defn}

We use descending Puiseux series via the following result, which is an immediate corollary of Theorem \ref{puiseux-thm}. 

\begin{cor} \label{descending-parametrization}
Let $(x,y)$ be a system of (polynomial) coordinates on $X = \cc^2$. Embed $X \into \pp^2$ via the map $(x,y) \mapsto [1:x:y]$. Let $P = [0:a:b]$ be a point at infinity and $\gamma$ be the germ of an analytic curve at $P$. Assume $a \neq 0$ and $\gamma$ is not the germ of the line at infinity. Then in $(x,y)$-coordinates $\gamma$ has a parametrization of the form $t \mapsto (t, \phi(t))$, $|t| \gg 0$, where $\phi(t)$ is a descending Puiseux series in $t$.  
\end{cor}

\subsection{Divisorial discrete valuations} \label{divisorial-section}
Let $\sigma:Y' \dashrightarrow Y$ be a bimeromorphic correspondence of normal complex algebraic surfaces and $C$ be an irreducible analytic curve on $Y'$. Then the local ring $\sheaf_{Y',C}$ of $C$ on $Y'$ is a discrete valuation ring. Let $\nu$ be the associated valuation on the field $\scrK$ of meromorphic functions on $Y'$; in other words $\nu$ is the order of vanishing along $C$. We say that $\nu$ is a {\em divisorial discrete valuation} on $\scrK$; the {\em center} of $\nu$ on $Y$ is $\sigma(C\setminus S)$, where $S$ is the set of points of indeterminacy of $\sigma$ (the normality of $Y$ ensures that $S$ is a discrete set, so that $C \setminus S \neq \emptyset$). Moreover, if $U$ is an open subset of $Y$, we say that $\nu$ is {\em centered at infinity} with respect to $U$ iff $\sigma(C\setminus S) \subseteq Y\setminus U$. The following result, which connects Puiseux series and divisorial discrete valuations, is a reformulation of \cite[Proposition 4.1]{favsson-tree}.

\begin{thm}\label{puiseux-correspondence}
Let $P \in \sigma(C\setminus S)$. Assume $Y$ is non-singular at $P$. Let $(u,v)$ be an analytic system of coordinates on a neighborhood $U$ of $P$ such that $\nu(u) > 0$. Then there is a {\em Puiseux polynomial} (i.e.\ a Puiseux series with finitely many terms) $\phi_\nu(u)$ (unique up to conjugacy) in $u$ and a (unique) rational number $r_\nu >  \deg_u(\phi_\nu)$ such that for every $f \in \cc[[u,v]]$, 
\begin{align}
\nu(f(u,v)) = \nu(u)\ord_u(f(u, \phi_\nu(u) + \xi u^{r_\nu})), \label{psi-nu-defn}
\end{align}
where $\xi$ is an indeterminate. 
\end{thm}

\begin{rem}[Geometric interpretation of $\phi_\nu(u) + \xi u^{r_\nu}$]
If $Q$ is a generic point of $C \cap \sigma^{-1}(U)$ such that both $Y'$ and $C$ are non-singular at $Q$, and $D$ is an irreducible analytic curve on $Y'$ which intersects $C$ transversally at $Q$, then near $\sigma(Q)$ the (possibly singular) curve $\sigma(D)$ has a Puiseux parametrization of the form $v =  \phi_\nu(u) + \xi' u^{r_\nu} + \hot$, where $\xi' \in \cc$ is generic, and $\hot$ denotes `higher order terms' (in $u$). See Proposition \ref{key-prop}, assertion \ref{key-parametrization} for a more precise statement.
\end{rem}

Combining Theorem \ref{puiseux-correspondence} with Corollary \ref{descending-parametrization} yields:

\begin{cor}\label{descending-correspondence}
Retain the notations and assumptions of Theorem \ref{puiseux-correspondence}. Assume moreover that there exists an open subset $U$ of $Y$ such that 
\begin{enumerate}
\item $\nu$ is centered at infinity with respect to $U$. 
\item there are analytic coordinates $(x,y)$ on $U$ such that $(u,v) = (1/x,y/x)$. 
\end{enumerate} 
Then there is a {\em descending Puiseux polynomial} (i.e.\ a descending Puiseux series with finitely many terms) $\phi_\nu(x)$ (unique up to conjugacy) in $x$ and a (unique) rational number $r_\nu <  \ord_x(\phi_\nu)$ such that for every $f \in \cc[x,y]$, 
\begin{align}
\nu(f(x,y)) = \nu(x)\deg_x(f(x, \phi_\nu(x) + \xi x^{r_\nu})), \label{psi-nu-descending-defn}
\end{align}
where $\xi$ is an indeterminate. 
\end{cor}

\begin{defn} \label{generic-descending-defn}
In the situation of Corollary \ref{descending-correspondence}, we say that $\psi_\nu(x,\xi) := \phi_\nu(x) + \xi x^{r_\nu}$ is the {\em generic descending Puiseux series of $\nu$}. Moreover, if $Y'$ is a surface bimeromorphic to $Y$ and $C \subseteq Y'$ is a curve such that $\nu$ is the order of vanishing along $C$, then we also say that $\psi_\nu(x,\xi)$ is the {\em generic descending Puiseux series associated to $C$}.
\end{defn}

\begin{defn}[Formal Puiseux pairs of generic descending Puiseux series] \label{formal-definition}
Let $\nu$ and $\psi_\nu(x,\xi) = \phi_\nu(x) + \xi x^{r_\nu}$ be as in Definition \ref{generic-descending-defn}. Let the Puiseux pairs of $\phi_\nu$ be $(q_1, p_1), \ldots, (q_l, p_l)$. Express $r_\nu$ as $q_{l+1}/(p_1 \cdots p_lp_{l+1})$, where $p_{l+1} \geq 1$ and $\gcd(q_{l+1}, p_{l+1}) = 1$. Then the {\em formal Puiseux pairs} of $\psi_\nu$ are $(q_1, p_1), \ldots, (q_{l+1}, p_{l+1})$, with $(q_{l+1}, p_{l+1})$ being the {\em generic} formal Puiseux pair. The {\em formal polydromy order} of $\psi_\nu$ is $p := p_1\cdots p_{l+1}$. 
\end{defn}

\subsection{Key polynomials (and generating sequences)}
In addition to Puiseux series, divisorial discrete valuations centered at a non-singular point on a surface can also be described in terms of a (finite) {\em generating sequence} (in the terminology of \cite{spivaluations}) or a (finite) sequence of {\em key polynomials} (in the terminology of \cite{maclane-key}). In this article we use key polynomials; regarding generating sequences, we only point out that every sequence of key polynomials contains a generating sequence \cite[Remark 2.31]{favsson-tree}.\\

Consider the setting of Theorem \ref{puiseux-correspondence}. The key polynomials of $\nu$ with respect to $(u,v)$-coordinates is a finite sequence of polynomials $\tilde g_0 = u,\tilde g_1 = v, \tilde g_2, \ldots, \tilde g_l \in \cc[u,v]$. We refer to \cite[Section 2.1]{favsson-tree} or \cite{maclane-key} for their defining properties. The following proposition is the compilation of all properties of key polynomials that we use.

\begin{prop} \label{key-prop}
Let $U$ be an open neighborhood of $P$ such that $(u,v)$ defines a system of coordinates on $U$. 
\begin{enumerate}
\item \label{key-initial} For each $j \geq 1$, $\tilde g_j$ is of the form 
\begin{align*}
\tilde g_j(u,v) = (v-a)^{d_j} + u\tilde h_j(u,v) 
\end{align*}
where $a \in \cc$ and $\tilde h_j \in \cc[u,v]$ with $\deg_v(\tilde h_j) < d_j$ (where $\deg_v$ denotes the degree in $v$). In particular, $\tilde g_j$ is monic in $v$ of degree $d_j$. Moreover, $d_{j+1}/d_j$ is an integer for each $j$, $1 \leq j \leq l-1$. 
\item For each $j \geq 1$, $\tilde g_j$ is irreducible as an element in $\cc[[u]][v]$. 
\item Let $n_l$ be the smallest positive integer such that $n_l\nu(\tilde g_l)$ is in the semigroup generated by $\nu(\tilde g_0), \ldots, \nu(\tilde g_{l-1})$. Then
\begin{enumerate}
\item There exist (unique) non-negative integers $n_0, \ldots, n_{l-1}$ such that $n_j < d_{j+1}/d_j$ for $1 \leq j \leq l-1$ and $n_l\nu(\tilde g_l) = \sum_{j=0}^{l-1} n_j\nu(\tilde g_j)$. 
\item \label{key-equation} Let $\xi$ be an indeterminate. Define $\tilde g_\nu(u,v,\xi) := \tilde g_l^{n_l} - \xi \prod_{j=0}^{l-1} \tilde g_j^{n_j} \in \cc[u,v,\xi]$. Then there exists a non-empty open disc $\tilde \Delta \subseteq \cc$ such that for all $\tilde \xi \in \tilde \Delta$, the strict transform of the curve $\{\tilde g_\nu(u,v,\tilde \xi) = 0\} \subseteq U$ on $\sigma^{-1}(U)$ intersects $C$ transversally at a single point. 
\item \label{key-parametrization} Let $\phi_\nu(u) + \xi u^{r_\nu}$ be as in \eqref{psi-nu-defn}. Then for all $\tilde \xi \in \tilde \Delta$, $\tilde g_\nu(u,v,\tilde \xi)$ is irreducible in $\cc[[u]][v]$ and has a root $v = \tilde \phi(u)$ where $\tilde \phi(u)$ is a Puiseux series in $u$ of the form
\begin{align*}
\tilde \phi(u) = \phi_\nu(u) + \tilde \xi^{1/n_l} u^{r_\nu} + \hot
\end{align*}
\end{enumerate} 
\end{enumerate}
\end{prop}

\begin{example}
Assume $\sigma:Y' \to Y$ is the minimal resolution of the singularity of the germ of $v^3 - u^2 = 0$ at the origin, and $C \subset Y'$ is the {\em last} exceptional curve. Then key polynomials are $u,v$. Moreover, $\nu(u) = 3$ and $\nu(v) = 2$. Proposition \ref{key-prop} in this case simply says that for generic $\tilde \xi \in \cc$, the strict transform of the germ of $v^3 - \tilde \xi u^2 = 0$ at the origin is transversal to $C$. Similarly, assume $\sigma$ is the minimal resolution of the singularity at the origin of the curve $(v^3 - u^2)^2 - u^3v^2 = 0$, and $C \subset Y'$ is the last exceptional curve. Then key polynomials are $u,v, v^3 - u^2$. Moreover, $\nu(u) = 6$, $\nu(v) = 4$, $\nu(v^3 - u^2) = 13$, and Proposition \ref{key-prop} says that for generic $\tilde \xi \in \cc$, the strict transform of the germ of $(v^3 - u^2)^2 - \tilde \xi u^3v^2 = 0$ at the origin is transversal to $C$. 
\end{example}

\subsection{Theory of surfaces} \label{surfacection}
In this section we compile some facts from bimeromorphic geometry of analytic surfaces. We start with Grauert's criterion for (analytic) contractibility of curves:

\begin{thm}[{\cite{grauert}}] \label{greorem}
Let $Y$ be a smooth complex analytic surface. Let $C_1, \ldots, C_n$ be irreducible curves on $Y$ and $C := C_1 \cup \cdots \cup C_n$. The following are equivalent:
\begin{enumerate}
\item The matrix of intersection numbers $(C_i, C_j)$ is negative definite.
\item There exists a morphism $f: Y \to Z$ such that $Z$ is a normal complex analytic surface, $f(C)$ is a finite set of points and $f|_{Y \setminus C}: Y\setminus C \to Z \setminus f(C)$ is an isomorphism.  
\end{enumerate}
\end{thm}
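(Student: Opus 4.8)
The statement is Grauert's contractibility criterion, and its two implications are of entirely different natures, so I would prove them separately.

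\emph{The implication $(2)\Rightarrow(1)$} is elementary and I would argue as follows. Near $f(C)$ the morphism $f$ is proper (it collapses the compact curve $C$) and, being an isomorphism off $C$, it is bimeromorphic; hence by normality of $Z$ (Stein factorization / Zariski's main theorem applied locally) its fibres over the points of $f(C)$ are connected. Consequently the $C_i$'s are partitioned according to which point of the finite set $f(C)$ they are collapsed to, $C_i$ and $C_j$ lying over distinct points are disjoint, and the intersection matrix $M=\big((C_i,C_j)\big)$ is block-diagonal along this partition; it therefore suffices to prove negative definiteness of one block, i.e.\ to assume all the $C_i$ map to a single point $p$ and $\bigcup_i C_i = f^{-1}(p)$ is connected. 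Pick a holomorphic function $g$ on a neighbourhood $V$ of $p$ with $g(p)=0$, $g\not\equiv 0$, and set $h:=f^*g$, a holomorphic function on the neighbourhood $f^{-1}(V)$ of the compact set $\bigcup_i C_i$. Then $\operatorname{div}(h)=\sum_i a_iC_i+H$ with every $a_i\ge 1$ (as $h\equiv0$ on each $C_i$) and $H$ the effective strict transform of the curve $\{g=0\}\ni p$; so $(H,C_j)\ge 0$ for all $j$, and $(H,C_{j_0})>0$ for at least one $j_0$ because $H$ meets the fibre $f^{-1}(p)$ over $p\in\{g=0\}$. Since $\operatorname{div}(h)$ is a principal divisor defined in a neighbourhood of the compact curve $C_j$, we get $(\operatorname{div}(h),C_j)=0$ for each $j$, whence the strictly positive vector $a=(a_i)$ satisfies $Ma\le 0$ componentwise and $Ma\ne 0$. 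It remains to invoke the linear-algebra fact: a symmetric matrix $M$ with $M_{ij}\ge 0$ for $i\ne j$ (which holds, as the $C_i$ are distinct irreducible curves), with connected associated graph, admitting a positive vector $a$ with $Ma\le 0$, $Ma\ne 0$, is negative definite. This is seen by writing, for an arbitrary real vector $x$ and $b_i:=x_i/a_i$,
\[
x^{\mathsf{T}}Mx \;=\; \sum_i a_i b_i^{\,2}\,(Ma)_i \;-\;\tfrac12\!\!\sum_{i\ne j} M_{ij}\,a_ia_j\,(b_i-b_j)^2\;\le\;0,
\]
where equality forces all $b_i$ equal (by connectedness and the second sum) and then all $b_i=0$ (because $(Ma)_i<0$ for some $i$, via the first sum).

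\emph{The implication $(1)\Rightarrow(2)$} is Grauert's theorem proper and is the substantial half; the plan follows the classical route. First one converts the numerical hypothesis into positivity: since $M$ is negative definite, positive integers $a_i$ can be chosen so that $D:=\sum_i a_iC_i$ has $(D,C_j)<0$ for every $j$, i.e.\ $\mathcal O_Y(-D)$ restricts to a positive line bundle on each $C_j$. Using this one constructs, on a neighbourhood of $C$, a Hermitian metric on a suitable power of $\mathcal O_Y(-D)$ whose curvature form is positive in a neighbourhood of $C$; if $s$ denotes the section of $\mathcal O_Y(D)$ with divisor $D$, then $\varphi:=-\log\|s\|^2$ is a strictly plurisubharmonic exhaustion of a punctured neighbourhood of $C$, so its small sublevel sets $U_\varepsilon$ form a fundamental system of strongly pseudoconvex neighbourhoods of $C$ whose maximal compact analytic subset is exactly $C$. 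Second, one applies Grauert's fundamental theorem on strongly pseudoconvex spaces (the Remmert reduction): $U_\varepsilon$ admits a proper holomorphic map $f_0\colon U_\varepsilon\to Z_0$ onto a Stein space which is an isomorphism off $C$ and collapses each connected component of $C$ to a point, with $\mathcal O_{Z_0}=(f_0)_*\mathcal O_{U_\varepsilon}$. Finally one glues: $Z$ is obtained by patching $Z_0$ to $Y\setminus C$ along $U_\varepsilon\setminus C\cong Z_0\setminus f_0(C)$, and $f_0$ together with the identity on $Y\setminus C$ assemble into $f\colon Y\to Z$; normality of $Z$ is inherited since $\mathcal O_{Z_0}=(f_0)_*\mathcal O_{U_\varepsilon}$ is a sheaf of integrally closed local rings (pushforward of the structure sheaf of the normal space $U_\varepsilon$) and off $f(C)$ the space $Z$ is locally $Y$.

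The main obstacle is the forward direction: extracting a strictly plurisubharmonic function (equivalently, a strongly pseudoconvex neighbourhood of $C$) out of the purely numerical negative-definiteness, and then appealing to Grauert's deep theorem that strongly pseudoconvex complex spaces are proper modifications of Stein spaces. The converse rests only on the vanishing of the intersection of a locally principal divisor with a compact fibre, together with the Perron--Frobenius-type lemma above. For the purposes of the present paper it is enough to quote this equivalence from \cite{grauert} and use it through its two consequences: negative-definite configurations of curves can be analytically contracted, and any contractible configuration is negative-definite.
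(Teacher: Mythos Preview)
The paper does not prove this statement at all: Theorem~\ref{greorem} is quoted as a background result with a bare citation to \cite{grauert}, and is used only as a black box (to contract negative-definite configurations in Propositions~\ref{singular-proposition} and~\ref{limiting-proposition}, and conversely in Corollary~\ref{blowing-down-at-infinity}). So there is no ``paper's own proof'' to compare your proposal against.

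That said, your sketch is a faithful outline of the classical argument. The $(2)\Rightarrow(1)$ half is the standard Mumford-style computation and your write-up is essentially complete; the Perron--Frobenius-type identity you display is exactly the right lemma. For $(1)\Rightarrow(2)$ you correctly identify the two substantial inputs (producing a strongly pseudoconvex neighbourhood from negative definiteness, then invoking the Remmert reduction for strongly pseudoconvex spaces), though of course each of these is itself a nontrivial theorem that you are quoting rather than proving. Your closing sentence already says what matters here: for the purposes of this paper one simply cites the equivalence and uses its two directions, which is precisely what the paper does.
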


It is a standard fact that singularities of complex analytic surfaces can be resolved. The singular surfaces $Y'$ we encounter in this article are normal and they come equipped with a bimeromorphic correspondence $\sigma: Y' \dashrightarrow Y$, where $Y$ is a non-singular projective surface. In this case the resolution of singularities of $Y$ is easy to describe:

\begin{thm}
Let $\sigma_0 := \sigma$ and $Y_0 := Y$. Algorithm \ref{alg-resolution} stops after finitely many steps with a bimeromorphic correspondence $\sigma_k: Y' \dashrightarrow Y_k$. Moreover, $\sigma_k^{-1}:Y_k \to Y'$ is a holomorphic map and is a resolution of singularities of $Y'$. 
\end{thm}

\begin{algorithm}[Resolution of singularities of $Y'$] \label{alg-resolution}
Assume $\sigma_i: Y' \dashrightarrow Y_i$ has been defined for $i \geq 0$. If $\sigma_i$ does not contract any curve of $Y'$, then stop. Otherwise pick an irreducible curve $C'$ on $Y'$ which gets contracted to a point $P \in Y_i$. Let $Y_{i+1}$ be the blow up of $Y_i$ at $P$ and $\sigma_{i+1}:Y' \dashrightarrow Y_{i+1}$ be the induced bimeromorphic correspondence. Now repeat. 
\end{algorithm}

We also use the well known fact that every compactification of $\cc^2$ is an {\em algebraic space}, i.e.\ an analytic surface for which  the field of meromorphic functions has transcendence degree $2$:

\begin{thm}[{\cite{morrow}}] \label{moishezon-theorem}
Let $\bar X$ be a normal analytic compactification of $\cc^2$. Then $\bar X$ is an {\em algebraic space}. In particular, the identity map between $\cc^2$ and one of the affine coordinate charts of $\pp^2$ extends to a bimeromorphic correspondence of analytic varieties. 
\end{thm}

\subsection{Dual graph of the resolution of curve singularities} \label{dual-section}

\begin{defn} \label{dual-defn}
Let $E_1, \ldots, E_k$ be non-singular curves on a (non-singular) surface such that for each $i \neq j$, either $E_i \cap E_j = \emptyset$, or $E_i$ and $E_j$ intersect transversally at a single point. Then $E = E_1 \cup \cdots \cup E_k$ is called a {\em simple normal crossing curve}. The {\em (weighted) dual graph} of $E$ is a weighted graph with $k$ vertices $V_1, \ldots, V_k$ such that 
\begin{itemize}
\item there is an edge between $V_i$ and $V_j$ iff $E_i \cap E_j \neq \emptyset$,
\item the weight of $V_i$ is the self intersection number of $E_i$.
\end{itemize}
Usually we will abuse the notation, and label $V_i$'s also by $E_i$. 
\end{defn}

We recall the description of the dual graph of the exceptional divisor of the resolution of an irreducible plane curve singularity following \cite[Section 8.4]{bries-horst-curves}. Assume that we are given an analytically irreducible curve singularity (at a non-singular point of a surface) with Puiseux pairs $(\tilde q_1, \tilde p_1), \ldots, (\tilde q_m, \tilde p_m)$. Then the dual weighted graph for the minimal resolution of the singularity is as in figure \ref{curve-resolution},
\begin{figure}[htp]
\newcommand{\curveresolutionblock}[6]{
 	\pgfmathsetmacro\x{0}
 	\pgfmathsetmacro\y{0}
 	
 	\draw[thick] (\x - \edge,\y) -- (\x,\y);
 	\draw[thick] (\x,\y) -- (\x+\edge,\y);
 	\draw[thick] (\x,\y) -- (\x, \y-\vedge);
 	\draw[thick, dashed] (\x, \y-\vedge) -- (\x, \y-\vedge - \dashedvedge);
 	\draw[thick] (\x, \y-\vedge - \dashedvedge) -- (\x, \y-2*\vedge - \dashedvedge);
 	
 	\fill[black] (\x - \edge, \y) circle (3pt);
 	\fill[black] (\x, \y) circle (3pt);
 	\fill[black] (\x + \edge, \y) circle (3pt);
 	\fill[black] (\x, \y- \vedge) circle (3pt);
 	\fill[black] (\x, \y- \vedge - \dashedvedge) circle (3pt);
 	\fill[black] (\x, \y- 2*\vedge - \dashedvedge) circle (3pt);
 	
 	\draw (\x- \edge,\y )  node (left) [above] {$-{#1}_{#5}^{{#2}_{#5}}$};
 	\draw (\x,\y )  node (center) [above] {$-{#1}_{#6}^0 - 1$};
 	\draw (\x+ \edge,\y )  node (right) [above] {$-{#1}_{#6}^1$};
 	\draw (\x,\y- \vedge)  node (bottom1) [right] {$-{#3}_{#5}^{{#4}_{#5}}$};
 	\draw (\x,\y- \vedge - \dashedvedge)  node (bottom2) [right] {$-{#3}_{#5}^2$};
 	\draw (\x,\y- 2*\vedge - \dashedvedge)  node (bottom3) [right] {$-{#3}_{#5}^1$};
}
\begin{center}
\begin{tikzpicture}
 	\pgfmathsetmacro\edge{1.5}
 	\pgfmathsetmacro\vedge{.75}
 	\pgfmathsetmacro\dashedvedge{1}
 	
 	\draw[thick, dashed] (0,0) -- (\edge,0);
 	\fill[black] (0, 0) circle (3pt);
 	\draw (0,0)  node (first) [above] {$-u_1^1$};
 	\draw (0,0)  node (e1) [below] {$E_1$};
 	
	\begin{scope}[shift={(2*\edge,0)}]
		\curveresolutionblock{u}{t}{v}{r}{1}{2}
	\end{scope}
	\draw (\edge,0)  node (et1) [below] {$E_{t_1}$};
	
	\draw[thick, dashed] (3*\edge,0) -- (4*\edge,0);
	
	\begin{scope}[shift={(5*\edge,0)}]
		\curveresolutionblock{u}{t}{v}{r}{m-1}{m}
	\end{scope}	
	
	\draw[thick, dashed] (6*\edge,0) -- (7*\edge,0);	
	
	\pgfmathsetmacro\x{8*\edge}
 	\pgfmathsetmacro\y{0}
 	
 	\draw[thick] (\x - \edge,\y) -- (\x,\y);
 	\draw[thick] (\x,\y) -- (\x, \y-\vedge);
 	\draw[thick, dashed] (\x, \y-\vedge) -- (\x, \y-\vedge - \dashedvedge);
 	\draw[thick] (\x, \y-\vedge - \dashedvedge) -- (\x, \y-2*\vedge - \dashedvedge);
 	
 	\fill[black] (\x - \edge, \y) circle (3pt);
 	\fill[black] (\x, \y) circle (3pt);
 	\fill[black] (\x, \y- \vedge) circle (3pt);
 	\fill[black] (\x, \y- \vedge - \dashedvedge) circle (3pt);
 	\fill[black] (\x, \y- 2*\vedge - \dashedvedge) circle (3pt);
 	
 	\draw (\x- \edge,\y )  node (left) [above] {$-u_m^{t_m}$};
 	\draw (\x,\y )  node (center) [above] {$-1$};
	\draw (\x,\y)  node (minus1) [right] {$E^*$}; 
 	\draw (\x,\y- \vedge)  node (bottom1) [right] {$-v_m^{r_m}$};
 	\draw (\x,\y- \vedge - \dashedvedge)  node (bottom2) [right] {$-v_m^{2}$};
 	\draw (\x,\y- 2*\vedge - \dashedvedge)  node (bottom3) [right] {$-v_m^{1}$};
		
\end{tikzpicture}
\caption{Dual graph for the minimal resolution of singularities of an irreducible plane curve-germ}\label{curve-resolution}
\end{center}
\end{figure}
where we denoted the `last exceptional divisor' by $E^*$ and the `left-most' $t_1$ vertices by $E_1, \ldots, E_{t_1}$ (and left all other vertices untitled). The weights $u_i^j$ and $v_i^j$ satisfy: $u_i^0, v_i^0 \geq 1$ and $u_i^j, v_i^j \geq 2$ for $j > 0$, and are uniquely determined from the continued fractions (see, e.g.\ \cite[Section 2.2]{mendris-nemethi}): 
\begin{gather} \label{continued-fractions}
\frac{\tilde p_i}{q'_i} = u_i^0 - \cfrac{1}{u_i^1 - \cfrac{1}{\ddots - \frac{1}{u_i^{t_i}}}},\quad 
\frac{q'_i}{\tilde p_i} = v_i^0 - \cfrac{1}{v_i^1 - \cfrac{1}{\ddots - \frac{1}{v_i^{r_i}}}},\quad 
\text{where}\ q'_i := 
	\begin{cases}
	\tilde q_1 								& \text{if}\ i = 1\\
	\tilde q_i - \tilde q_{i-1}\tilde p_i	& \text{otherwise.}
	\end{cases}
\end{gather}
Note that $(q'_1, \tilde p_1), \ldots, (q'_l, \tilde p_l)$ are called the {\em Newton pairs} of the curve branch, and the Puiseux series of the branch can be expressed in the following form:
$$\psi(u) =  \cdots+ u^{\frac{q'_1}{\tilde p_1}} (a'_1 + \cdots +  u^{\frac{q'_2}{\tilde p_1\tilde p_2}}(a'_2 + \cdots + u^{\frac{q'_3}{\tilde p_1\tilde p_2\tilde p_3}}(\cdots ))).$$

\section{Generic curvettes at infinity} \label{curvettection}
\begin{notation} \label{notation-x}
Throughout the rest of the article we use $X$ to denote $\cc^2$ with coordinate ring $\cc[x,y]$ and $\Xxy$ to denote copy of $\pp^2$ such that $X$ is embedded into $\Xxy$ via the map $(x,y) \mapsto [1:x:y]$. We also denote by $L_\infty$ the {\em line at infinity} $\Xxy\setminus X$, and by $Q_y$ the point of intersection of $L_\infty$ and (closure of) the $y$-axis. 
\end{notation}

\begin{defn}
An {\em irreducible analytic curve germ at infinity} on $X$ is the image $\gamma$ of an analytic map $\eta$ from a punctured neighborhood $\Delta'$ of the origin in $\cc$ to $X$ such that $|\eta(s)| \to \infty$ as $|s| \to 0$ (in other words, $\eta$ is analytic on $\Delta'$ and has a pole at the origin). Let $\bar X$ be an analytic compactification of $X$. Theorem \ref{moishezon-theorem} implies that there is a unique point $P \in \bar X \setminus X$ such that $|\eta(s)| \to P$ as $|s| \to 0$. We call $P$ the {\em center} of $\gamma$ on $\bar X$, and write $P = \lim_{\bar X} \gamma$. Let $\Xxy$ be as in Notation \ref{notation-x}. Assume $\lim_{\Xxy} \gamma \neq Q_y$. Then Corollary \ref{descending-parametrization} implies that for $|t| \gg 0$, $\gamma$ has a parametrization of the form $\theta: t \mapsto (t, \phi(t))$, where $\phi(t)$ is a descending Puiseux series in $t$. We call $\theta$ a {\em descending Puiseux parametrization} of $\gamma$. 
\end{defn}

\begin{example}
Note that if $\lim_{\Xxy} \gamma = Q_y$, then $\gamma$ might {\em not} have descending Puiseux parametrization. Indeed, let $\gamma$ be the curve-germ at infinity on $X$ corresponding to the germ of the (closure of the) $y$-axis at $Q_y$. Then there is no descending Puiseux series $\phi(t)$ in $t$ such that $\gamma$ has a parametrization of the form $t \mapsto (t, \phi(t))$ for $|t| \gg 0$.
\end{example}

Now let $\bar X$ be a normal analytic compactification of $X$ and $C$ be an irreducible component of the curve at $\bar X_\infty := \bar X\setminus X$ at infinity on $\bar X$. Theorem \ref{moishezon-theorem} implies that the identity map of $X$ induces a bimeromorphic correspondence $\sigmaxy: \bar X \dashrightarrow \Xxy$. Let $S$ be the set of points of indeterminacy of $\sigmaxy$. Since $\bar X$ is normal, it follows that $S$ is a finite set. After a linear change of coordinates of $\cc[x,y]$, we may ensure that $\bar X$ satisfies the property \sumption for every irreducible curve $C \subseteq \bar X\setminus X$:
\begin{align}
\parbox{.57\textwidth}{
$\sigmaxy(C \setminus S) \neq \{Q_y\}$ (i.e.\ either $\sigmaxy$ does not contract $C$, or it contracts $C$ to some point other than $Q_y$).
}\tag{$C_{(x,y)}$}
\label{assumption}
\end{align}

\begin{remtation} \label{assumption-remark}
Note that if $C$ is an irreducible curve in $\bar X \setminus X$ and $\nu$ is the order of vanishing along $C$, then 
\begin{align*}
\bar X\ \text{satisfies \sumption}
	&\iff \sigmaxy(C \setminus S) \neq \{Q_y\} \\
	&\iff y/x\ \text{restricts to a regular function on a non-empty open set of $C$} \\
	&\iff \nu(y/x) \geq 0 \\
	&\iff \nu(x) \leq \nu(y).
\end{align*}
\end{remtation}

Pick $P \in \sigmaxy(C\setminus S)\setminus\{Q_y\} \subseteq L_\infty$. 
Let $\gamma$ be an irreducible curve-germ at infinity on $X$ with $\lim_{\Xxy}\gamma = P$. Let $P_\gamma := \lim_{\bar X}  \gamma \in \bar X$ and $\bar \gamma^{\bar X} := \gamma \cup \{P_\gamma\}$ be the closure of $\gamma$ in $\bar X$. We say that $\gamma$ is a {\em curvette at infinity} \footnote{The use of the term `curvette' to denote germs of transversal curves at smooth points of a given curve is due to Deligne \cite{sga7II}.} associated to $C$ iff $P_\gamma \in C$ and $\bar \gamma^{\bar X}$ intersects $C$ transversally at $P_\gamma$ (in particular, $P_\gamma$ is a non-singular point of both $C$ and $\bar \gamma^{\bar X}$). We say that $\gamma$ is a {\em generic curvette at infinity} associated to $C$ if furthermore $P_\gamma$ is a generic point of $C$.

\begin{prop}[Parametrizations of generic curvettes at infinity]\label{curvette-prop}
Let $\gamma$ be a generic curvette at infinity associated to $C$ and let $t \mapsto (t, \phi(t))$ be a descending Puiseux parametrization of $\gamma$.
\begin{enumerate}
\item There is a unique rational number $r$ and a finite set $\scrE \subseteq C$ 
such that if $\tilde \gamma$ is a curvette at infinity on $X$, then $\lim_{\bar X} \tilde \gamma \in C \setminus \scrE$ iff $\tilde \gamma$ has a descending Puiseux parametrization of the form $t \mapsto (t,\tilde \phi(t))$ such that $\deg_t(\tilde \phi(t) - \phi(t)) = r$.
\item Let $\tilde \gamma$ be a curvette at infinity on $X$ with a descending Puiseux parametrization of the form $t \mapsto (t,\tilde \phi(t))$ such that $\deg_t(\tilde \phi(t) - \phi(t)) \leq r$. Write $\phi - \tilde \phi = \tilde \xi x^{r} + \lot$ where $\tilde \xi \in \cc$. Then
\begin{enumerate}
\item \label{limiting-curves-2-1} $\lim_{\bar X} \tilde \gamma$ depends only on $\tilde \xi$. In particular, for generic values of $\tilde \xi$, $\lim_{\bar X} \tilde \gamma$ is a generic element of $C$.
\item \label{non-singular-limit-1} $\lim_{\bar X} \tilde \gamma$ is a non-singular point of $\bar {\tilde\gamma}^{\bar X}$ iff there are no characteristic exponents of $\tilde \phi$ smaller than $r$. 
\item \label{non-singular-limit-2} for all but finitely many values of $\tilde \xi$, $\tilde \gamma$ is a curvette at infinity associated to $C$ iff either (and therefore, both!) of the properties of assertion \ref{non-singular-limit-1} is satisfied. 
\end{enumerate}
\item Let $[\phi]_{>r}(x)$ be the descending Puiseux polynomial in $x$ obtained by removing from $\phi(x)$ all terms with degree $\leq r$ and define $\psi(x,\xi) := [\phi]_{>r}(x) + \xi x^r$, where $\xi$ is an indeterminate. Then $\psi(x,\xi)$ is precisely the generic descending Puiseux series of $\nu$.
\end{enumerate}
\end{prop}

\begin{proof}
The relation between (generic) descending Puiseux series and key polynomials of a valuation is given by assertion \ref{key-parametrization} of Proposition \ref{key-prop}. Proposition \ref{curvette-prop} follows from interpreting the properties of key polynomials compiled in Proposition \ref{key-prop} in terms of the associated descending Puiseux series. 
\end{proof}

Set $(u,v) := (1/x,y/x)$ and let $U$ be the coordinate chart of $\Xxy$ with coordinates $(u,v)$. Consider the situation of Corollary \ref{descending-correspondence} with $\sigma = \sigmaxy$. 

\begin{defn} \label{key-definition}
Let $\tilde g_0,\ldots, \tilde g_l \in \cc[u,v]$ be the sequence of key polynomials of $\nu$ with respect to $(u,v)$-coordinates. Set 
\begin{align*}
g_i &:= \begin{cases}
			x 	& \text{if}\ i = 0,\\
			x^{\deg_v(\tilde g_i)}\tilde g_i(1/x,y/x) & \text{othewise.}
		\end{cases}
\end{align*}
For each $i \geq 1$, $g_i \in \cc[x,x^{-1},y]$ and it is monic in $y$. We call $g_i$'s the sequence of {\em key forms} of $\nu$ with respect to $(x,y)$-coordinates. Finally, let $n_0, \ldots, n_l$ be as in Proposition \ref{key-prop}. Then define 
\begin{align*}
g_\nu(x,y,\xi) &:= x^{\deg_v(\tilde g_\nu)} \tilde g_\nu(1/x,y/x,\xi) 
		= g_l^{n_l} - \xi x^{n'_0} \prod_{j=1}^{l-1} g_j^{n_j} 
		\in \cc[x,x^{-1},y,\xi]
\end{align*}
where $n'_0 = n_l\deg_v(g_l) - n_0 - \sum_{j=1}^{l-1}n_j\deg_v(g_j)$. We call $g_\nu(x,y,\xi)$ the {\em generic key form} of $\nu$ in $(x,y)$-coordinates. 
\end{defn}

\begin{prop}[Affine equations of generic curvettes at infinity] \label{curvette-equation}
Pick $n \geq 0$ such that $x^n g_\nu \in \cc[x,y,\xi]$. For all $\tilde \xi \in \cc$, let $Z_{\tilde \xi}$ be the closure in $\Xxy$ of the curve $\{x^n g_\nu(x,y,\tilde \xi) = 0\} \subseteq X$.
\begin{enumerate}
\item \label{curvette-pt} For each $\tilde \xi \in \cc$, $Z_{\tilde \xi}$ intersects $L_\infty\setminus \{Q_y\}$ at a single point $Q_{\tilde \xi}$.
\item \label{curvette-assertion} For generic $\tilde \xi \in \cc$, the germ of $Z_{\tilde \xi} $ in a punctured neighborhood of $Q_{\tilde \xi}$ is a curvette at infinity associated to $C$.
\item \label{extra-pt} $Z_{\tilde \xi}$ intersects $L_\infty$ at $Q_y$ with intersection multiplicity $n$. 
\end{enumerate}
\end{prop}

\begin{proof}
Assertions \eqref{curvette-pt} and \eqref{extra-pt} follow from assertion \ref{key-initial} of Proposition \ref{key-prop}, and assertion \ref{curvette-assertion} follows from assertion \eqref{key-equation} of Proposition \ref{key-prop}.
\end{proof}

\begin{example} \label{degrexample}
Let $\bar X = \Xxy$ and $C = L_\infty$ (so that $\nu$ is the negative of degree in $(x,y)$-coordinates). Then the key forms are $x,y$, and the generic key form is $y - \xi x$. Propostion \ref{curvette-equation} in this case simply states (the obvious fact) that for $y - \tilde \xi x$ intersects the line $L_\infty$ transversally for generic $\tilde \xi$.  
\end{example}

\subsection{Effect of automorphisms of $\cc^2$ on parametrizations of generic curvettes at infinity} \label{normal-section}
Let $\gamma$ be a curve-germ at infinity on $X$ with a descending Puiseux parametrization $t \mapsto (t,\phi(t))$. In this section we study the effect on $\deg_t(\phi(t))$ of two `simple' types of automorphisms of the plane described below; the (simple) observations made in this section will be crucial in our proof of Jung's theorem that these automorphisms generate the full group of polynomial automorphisms of $\cc^2$.

\begin{defn}
let $F: \cc[x,y] \to \cc[x,y]$ be an automorphism. We call $F$ a {\em Type I automorphism} if it is of the form $(x,y) \mapsto (y,x)$ and a {\em Type II automorphism} if it is of the form $(x,y) \mapsto (x, y + ax^n)$, where $a \in \cc$ and $n \geq 0$.
\end{defn}

\begin{lemma} \label{typed-change-of-coordinates}
Let $\gamma$ be a curve-germ at infinity on $X$ with a descending Puiseux parametrization $t \mapsto (t,\phi(t))$ and $\omega := \deg_t(\phi(t))$, i.e.\ 
$$\phi(t) = at^\omega + \lot$$
for some $a \in \cc$. Assume $\omega > 0$.
\begin{enumerate}
\item  
\begin{enumerate}
\item \label{typeIchange1}  After the type I automorphism $(x,y) \mapsto (y,x)$, $\gamma$ has a descending Puiseux parametrization $t \mapsto (t,\tilde \phi(t))$ where $\deg_t(\tilde \phi(t)) = 1/\omega$. 
\item \label{typeIchange2}  Moreover, if $\omega = 1/n$ for some integer $n \geq 2$, then the number of Puiseux pairs of $\tilde \phi(t)$ is one less than the number of Puiseux pairs of $\phi(t)$. 
\end{enumerate}
\item \label{typeIIchange} If $\omega$ is a non-negative integer, then after the type II automorphism $(x,y) \mapsto (x,y-ax^\omega)$, $\gamma$ has a descending Puiseux parametrization of the form $t \mapsto (t,\phi(t) - at^\omega)$. 
\end{enumerate}
\end{lemma}

\begin{proof}
Assertions \eqref{typeIchange1} and \eqref{typeIIchange} are easy to see. Assertion \eqref{typeIchange2} follows from a straightforward induction on the number of Puiseux pairs of $\phi$. 
\end{proof}

Let $\bar X$ be a compactification of $X$ and $C$ be an irreducible component of the curve at infinity on $\bar X$. The following lemma shows that after a composition of finitely many Type I and II automorphisms, we can ensure that generic curvettes associated to $C$ have descending Puiseux parametrizations, and the initial term of these parametrizations has a `normal form'. 

\begin{lemma} \label{normal-lemma}
Let $\bar X$ and $C$ be as above, and $\gamma$ be a generic curvette at infinity on $X$ associated to $C$.
After a finite sequence of Type I and Type II automorphisms of $\cc[x,y]$, we can ensure that $\gamma$ has a descending Puiseux parametrization $t \mapsto (t,\phi(t))$, where $\phi(t)$ is of the following form:
\begin{align} \label{normal-form}
\phi(t) = \begin{cases} 
			\tilde \xi t^r, & r \in \qq,~ r \leq 1,~ \tilde \xi \in \cc\ \text{is generic, or} \\
			a_1t^{\omega_1} + \lot, & a_1 \in \cc\setminus\{0\},\ \omega_1 \in \qq \setminus \left(\zz_{\geq 0} \cup \{1/n: n \in \nn\}\right),~ \omega_1 < 1.
		   \end{cases}
\end{align}
\end{lemma}

\begin{proof}
Since any linear change of coordinates of $\cc[x,y]$ is a composition of Type I and II automorphisms, it follows that after composition of finitely many Type I and II automorphisms, we can ensure that $\bar X$ satisfies \sumption, which implies in particular that $\gamma$ has a descending Puiseux parametrization $t \mapsto (t,\phi(t))$. Assertion \eqref{typeIchange1} of Lemma \ref{typed-change-of-coordinates} then implies that it suffices to prove the following statement: after a  a finite sequence of automorphisms of $\cc[x,y]$ of types I and II, we can ensure that $\phi(t)$ is {\em not} of the following form:
\begin{align} \label{bad-form}
a_1t^{\omega_1} + \lot,\ \text{where}\ a_1 \in \cc\setminus\{0\},\ \text{and}\ \omega_1 \in \zz_{\geq 0} \cup \{1/n: n \in \zz_{\geq 1}\}. \tag{!}
\end{align}
Indeed, assume $\phi(t)$ is of the form \eqref{bad-form}. Then either $\phi(t) = at^n + \lot$ for some polynomial $f(x) \in \cc[x]$, or $\phi(t) = at^{1/n} + \lot$ for some $a \neq 0$ and a positive integer $n > 1$. In the first case apply Type II automorphism $(x,y) \mapsto (x, y - ax^n)$ and in the second case apply the Type I automorphism $(x,y) \mapsto (y,x)$. Note that
\begin{compactenum}
\item in the second case the number of Puiseux pairs of $\phi(t)$ decreases by one (assertion \eqref{typeIchange2} of Lemma \ref{typed-change-of-coordinates}),
\item in the first case the number of Puiseux pairs of $\phi(t)$ does not change, but $\deg_t(\phi(t))$ decreases (assertion \eqref{typeIIchange} of Lemma \ref{typed-change-of-coordinates}).
\end{compactenum}
The above observations imply that this process ends after finitely many steps, as required to complete the proof of the lemma. 
\end{proof}

\begin{reminition} \label{normal-form-definition}
We say that the initial exponent of $\phi(t)$ is in the {\em normal form} if $\phi(t)$ is as in \eqref{normal-form}. Note that $\phi(t)$ is in the normal form iff either $\sigmaxy$ maps $C$ generically on to $L_\infty \subseteq \Xxy$ (in which case $r=1$), or contracts $C$ to the point of intersection of $L_\infty$ and $x$-axis. 
\end{reminition}

\begin{rem} 
With a bit of more work than the proof of Lemma \ref{normal-lemma}, it can be shown that there is a `normal form' for $\phi(t)$ itself (i.e.\ not only the initial exponent). In \cite{sub2-2} we use this normal form to compute the moduli spaces and groups of automorphisms of algebraic compactifications of $\cc^2$ with one irreducible curve at infinity.
\end{rem}

\section{Primitive compactifications and resolution of their singularities} \label{primitive-section}
\begin{defn} \label{augmented-defn}
Let $\pi: \tilde X \to \bar X$ be a resolution of singularities of a compactification $\bar X$ of $X \cong \cc^2$ such that $\tilde X \setminus X$ is a simple normal crossing curve. The {\em augmented dual graph} of $\pi$ is the dual graph (Definition \ref{dual-defn}) of $\tilde X \setminus X$. 
\end{defn}

Let $\bar X$ be a (normal analytic) compactification of $\cc^2$ which is {\em primitive}, i.e.\ the curve $C$ at infinity on $\bar X$ is irreducible. In this section we show that the minimal resolution of singularities of $\bar X$ satisfies the properties of Definition \ref{augmented-defn}, and describe its augmented dual graph. As a consequence, we derive a new proof of Remmert and Van de Ven's characterization of $\pp^2$ as the only non-singular primitive compactification of $\cc^2$ and Jung's theorem on polynomial automorphisms.\\

We continue to adopt Notation \ref{notation-x} and assume that $\bar X$ satisfies \sumption, i.e.\ there exists $P \in \sigmaxy(C \setminus S) \setminus\{Q_y\}$. Let the generic descending Puiseux series for $C$ be 
\begin{align*} 
\psi (x,\xi) 
	&=  \phi(x) + \xi x^r\\
	&=  \cdots+ a_1 x^{\frac{q_1}{p_1}} + \cdots +  a_2 x^{\frac{q_2}{p_1p_2}} + \cdots 
	+ a_l x^{\frac{q_l}{p_1p_2 \cdots p_l}} + \cdots + \xi x^{\frac{q_l}{p_1p_2 \cdots p_{l+1}}}
\end{align*}
where $(q_1, p_1), \ldots, (q_{l+1}, p_{l+1})$ are the {\em formal Puiseux pairs} (Definition \ref{formal-definition}) of $\psi$. Then $(u,v) = (1/x,y/x)$ is a system of coordinate near $P$, and Proposition \ref{curvette-prop} implies that generic curvettes at infinity associated to $C$ have Puiseux parametrizations of the form
\begin{align}
v 	&= \cdots+ a_1 u^{\frac{\tilde q_1}{\tilde p_1}} + \cdots +  a_2 u^{\frac{\tilde q_2}{\tilde p_1 \tilde p_2}} + \cdots 
	+ a_l u^{\frac{\tilde q_l}{\tilde p_1 \tilde p_2 \cdots \tilde p_l}} 
	+ \cdots + \tilde \xi u^{\frac{\tilde q_l}{\tilde p_1 \tilde p_2 \cdots \tilde p_{l+1}}} + \text{\hot}
	\label{puiseux-param}
\end{align}
where $(\tilde q_i, \tilde p_i) = (p_1 \cdots p_i - q_i, p_i)$, $1 \leq i \leq l+1$, and $\tilde \xi$ is a generic element of $\cc$. Apply Algorithm \ref{alg-resolution} with $\sigma_0 = \sigmaxy$ to construct a resolution of singularities $\tilde \sigma: \tilde X \to \bar X$. Let $\Gamma$ be the corresponding augmented dual graph. The following proposition gives a description of $\Gamma$ in terms of the dual graph of the minimal resolution of the plane curve singularity of the curve germ with Puiseux parametrization \eqref{puiseux-param}. 

\begin{prop} \label{primitive-resolution-graph}
Let $E$ be the strict transform of $C$. Assume the initial exponent of $\psi$ is in the normal form (Definition \ref{normal-form-definition}). Then
\begin{enumerate}
\item If $\psi(x,\xi) = \xi x$, then $\bar X \overset{\sigmaxy}{\cong} \Xxy \cong \pp^2$ (in particular, $\bar X$ is non-singular), and $\Gamma$ consists of a single vertex $E$. 
\item \label{case:graph1} Otherwise if $p_{l+1} > 1$, then $\Gamma$ is as in figure \ref{fig:graph1}, where $\Gamma'$ is as in figure \ref{curve-resolution} with $m = l + 1$. In particular, $\bar X$ has at most two singular points, one of them is at worst a cyclic quotient singularity.
\item \label{case:graph2} Otherwise ($p_{l+1} = 1$ and) $\Gamma$ is as in figure \ref{fig:graph2}, where $\Gamma''$ is the graph of Figure \ref{curve-resolution} with $m = l$ and one change - namely the self-intersection number of $E^*$ in $\Gamma''$ is $-2$. In particular, $\bar X$ has at most one singular point. 
\end{enumerate}
\begin{figure}[htp]
\newcommand{\curveresolutionsmallblock}[5]{
	\pgfmathsetmacro\edge{1.25}
 	\pgfmathsetmacro\vedge{1}
 	\pgfmathsetmacro\x{0}
 	\pgfmathsetmacro\y{0}
 	
 	\draw[thick] (\x - \edge,\y) -- (\x,\y);
 	\draw[thick, dashed] (\x,\y) -- (\x+\edge,\y);
 	\draw[thick, dashed] (\x + \edge,\y) -- (\x+\edge,\y - \vedge);
 	\draw[thick, dashed] (\x + \edge, \y) -- (\x + 2*\edge, \y);
 	\draw[thick, dashed] (\x + 2*\edge,\y) -- (\x+ 2*\edge,\y - \vedge);
 	
 	\fill[black] (\x - \edge, \y) circle (3pt);
 	\fill[black] (\x, \y) circle (3pt);
 	\fill[black] (\x + \edge, \y) circle (3pt);
 	\fill[black] (\x + \edge, \y - \vedge) circle (3pt);
 	\fill[black] (\x + 2*\edge, \y) circle (3pt);
 	\fill[black] (\x + 2*\edge, \y - \vedge) circle (3pt);
 	
 	\draw (\x- \edge,\y )  node (e0up) [above] {$1 - u_1^0$};
 	\draw (\x- \edge,\y )  node (e0down) [below] {$E_0$};
 	\draw (\x,\y )  node (e1up) [above] {$-u_1^1$};
 	\draw (\x,\y )  node (e1down) [below] {$E_1$};
 	\draw (\x+ \edge,\y )  node (middleup) [above] {$-u_2^0-1$};
 	\draw (\x+ \edge,\y - \vedge)  node (middlebottom) [right] {$-v_1^1$}; 	
 	\draw (\x+ 2*\edge,\y )  node (estarup) [above] {#1};
 	\draw (\x+ 2*\edge,\y )  node (estardown) [below right] {#2};
 	\draw (\x+ 2*\edge, \y - \vedge)  node (rightbottom) [right] {$-v_{#3}^1$};
 	
 	\draw[thick, dashed, blue] (\x - 0.5*\edge, \y + \vedge) rectangle (\x + 2*\edge + #5, \y - 1.4*\vedge);
 	
 	\draw (\x + 0.75*\edge + 0.5*#5, \y - 1.8*\vedge) node (Gamma) {#4};
}
\begin{center}
\subfigure[Case $p_{l+1} > 1$]{
	\label{fig:graph1}
	\begin{tikzpicture}[scale=.9, font = \small] 	
 		\curveresolutionsmallblock{$-1$}{$E^* = E$}{l+1}{$\Gamma'$}{1.5}
	\end{tikzpicture}
}
\subfigure[Case $p_{l+1} = 1$]{
	\label{fig:graph2}
	\begin{tikzpicture}[scale=.9, font = \small] 	
 		\curveresolutionsmallblock{$-2$}{$E^*$}{l}{$\Gamma''$}{1}
 		\pgfmathsetmacro\edge{1.25}
 		\pgfmathsetmacro\extralen{0.5}
 		\begin{scope}[shift={(2*\edge + \extralen,0)}]
 			\pgfmathsetmacro\sedge{1}
 			\pgfmathsetmacro\dashedge{1.25}
 			\draw[thick] (-\extralen, 0) -- (\sedge, 0);
 			\draw[thick, dashed] (\sedge, 0) -- (\sedge + \dashedge, 0);
 			\draw[thick] (\sedge + \dashedge, 0) -- (2*\sedge + \dashedge, 0);
 		
 			\fill[black] (\sedge, 0) circle (3pt);
 			\fill[black] (\sedge + \dashedge, 0) circle (3pt);
 			\fill[black] (2*\sedge + \dashedge, 0) circle (3pt);
 		
 			\draw (\sedge,0 )  node (firstup) [above] {$-2$};
 			\draw (\sedge + \dashedge,0 )  node (secondup) [above] {$-2$};
 			\draw (2*\sedge + \dashedge,0 )  node (lastup) [above] {$-1$};
 			\draw (2*\sedge + \dashedge,0 )  node (lastright) [right] {$E$};
 			
 			\draw [thick, decoration={brace, mirror, raise=5pt},decorate] (\sedge,0) -- (\sedge + \dashedge,0);
 			\draw (\sedge + 0.5*\dashedge,-0.75) node [text width= 1.75cm, align = center] (extranodes) {$q_l - q_{l+1} -1$ vertices};
 		\end{scope}	
	\end{tikzpicture}
}
\caption{Augmented dual graph for the resolution of Algorithm \ref{alg-resolution}}\label{fig:primitive-resolution-graph}
\end{center}
\end{figure}
\end{prop} 

\begin{rem} \label{non-minimal-remark}
Note that the resolution of Proposition \ref{primitive-resolution-graph} is {\em not} minimal if (and only if) $u_1^0 = 2$.
\end{rem}

\begin{proof}[Proof of Proposition \ref{primitive-resolution-graph}]
The first assertion is straightforward. The other assertions follow from the discussion in Section \ref{dual-section} and the following observations:\\

1. In the scenario of assertion \ref{case:graph1}, the Puiseux pairs of generic curvettes at infinity associated to $C$ are $(\tilde q_1, \tilde p_1), \ldots, (\tilde q_{l+1}, \tilde p_{l+1})$ and Algorithm \ref{alg-resolution} corresponds precisely to resolution of singularities of these curvettes at infinity.\\

2. In the scenario of assertion \ref{case:graph2}, the Puiseux pairs of generic curvettes at infinity associated to $C$ are $(\tilde q_1, \tilde p_1), \ldots, (\tilde q_{l}, \tilde p_{l})$ and Algorithm \ref{alg-resolution} corresponds to at first resolving the singularities of these curvettes at infinity, and then $q_{l} - q_{l+1}$ additional blow-ups. \\

3. The vertex $e_0$ in Figures \ref{fig:graph1} and \ref{fig:graph2} corresponds to $E_0$, which is the strict transform of $L_\infty \subseteq \Xxy$. The equation of $L_\infty$ near $P$ is $u=0$. On the other hand, the normal form of $\psi$ implies that the order (in $u$) of the right hand side of \eqref{puiseux-param} is $\tilde q_1/\tilde p_1$. It follows that  strict transform of $L_\infty$ contains the center of precisely the first $u^0_1$-blow ups (where $u^0_1$ is defined in \eqref{continued-fractions}).
\end{proof}

\begin{rem} \label{easy-2-remark}
More generally, if $\bar X$ is an arbitrary normal analytic compactification of $\cc^2$ and $C$ is an irreducible curve at infinity on $\bar X$, then the arguments from the proof of Proposition \ref{primitive-resolution-graph} imply that there is a non-singular compactification $\tilde X$ of $\cc^2$ dominating $\Xxy \cong \pp^2$ such that the dual graph of the curve at infinity on $\tilde X$ has the same shape as $\Gamma$ of figure \ref{fig:primitive-resolution-graph}. In particular, contracting all curves at infinity on $\bar X$ other than $E_0$ and $E$ results in a compactification $\bar X^*$ with precisely two irreducible curves $E^*_0$ and $E^*$ at infinity, 
\begin{itemize}
\item the bimeromorphic correspondence $\Xxy \dashrightarrow \bar X^*$ maps $L_\infty$ dominantly on to $E^*_0$.
\item the bimeromorphic correspondence $\bar X \dashrightarrow \bar X^*$ maps $C$ dominantly on to $E^*$.
\end{itemize}
This implies that 
\begin{enumerate}
\item $\bar X^*$ is precisely the compactification guaranteed by assertion \ref{easy-existence} of Theorem \ref{intersection-thm} in the case that $k = 2$ and $\nu_2$ is the divisorial valuation associated to $C$.
\item $\tilde X$ is precisely the {\em minimal} resolution of singularities of $\bar X^*$.
\end{enumerate}
Moreover, let $P^*$ be the point of intersection of $E^*_0$ and $E^*$. We claim that $E^* \setminus \{P^*\} \cong \cc$. Indeed, this is clear if $\Gamma$ is as in figure \ref{fig:graph2}. On the other hand, if $\Gamma$ is as in figure \ref{fig:graph1}, then it suffices to show that $E^*$ is non-singular at the point $Q^*$ to which the curves corresponding to the right-most vertical string of $\Gamma$ contracts. But the singularity at $Q^*$ is a cyclic quotient (or {\em Hirzebruch-Jung}) singularity, and $E$ is transversal to the string of exceptional divisor of its resolution. It then follows from the well known properties of cyclic-quotient singularities (see e.g.\ \cite[Section III.5]{barth-peters-van-de-ven}) that $E^*$ does not acquire any singularity at $Q^*$.
\end{rem}

As mentioned in Remark \ref{non-minimal-remark}, the resolution of singularities of $\bar X$ constructed in Proposition \ref{priminimal-resolution-graph} may {\em not} be minimal. Understanding the minimal resolution of $\bar X$ requires a more detailed analysis of the change of the initial exponent of a Puiseux series under blow up. This is the content of the next theorem. 

\begin{thm} \label{priminimal-resolution-graph}
Let the assumptions and notations be as in Proposition \ref{primitive-resolution-graph}; in particular the initial exponent of $\psi$ is in the normal form, and $\Gamma$ is as in figure \ref{fig:graph1} if $p_{l+1} > 1$ and as in figure \ref{fig:graph2} if $p_{l+1} = 1$. 
\begin{enumerate}
\item \label{primitive-nonsingular} $\bar X$ is non-singular iff $\psi(x,\xi) = \xi x$. 
\item \label{primitive-singular-1} Otherwise if $q_1/p_1 > 1/2$, then $u_1^0 > 2$ and $\Gamma$ is the augmented dual graph of the minimal resolution of $\bar X$.
\item \label{primitive-singular-2} Otherwise let $\tilde q_1 := p_1 - q_1$. Then we must have $p_1 = \tilde q_1 + r_1$ and $\tilde q_1 = m_1r_1 + r_2$ for some positive integers $r_1, m_1, r_2$ with $r_2 < r_1 < \tilde q_1$. Moreover, if $t_1$ is as in $\Gamma$ (see the `leftmost' string of figure \ref{curve-resolution}), then $t_1 \geq m_1$ and $u_1^{m_1} \geq 3$ and $u_1^j = 2$ for all $j$, $1 \leq j < m_1$. The augmented dual graph of the minimal resolution of $\bar X$ is gotten from $\Gamma$ by deleting all the vertices to the left of $e_{m_1}$ and changing the weight of $e_{m_1}$ to $-u_1^{m_1} + 1$.
\end{enumerate}
\end{thm}

\begin{proof}
The $(\Leftarrow)$ implication of the first assertion follows from Proposition \ref{primitive-resolution-graph}. Now we assume that $\psi(x,\xi) \neq \xi x$ and show that either 2nd or the 3rd assertion of the theorem is true. Note that this will also prove the $(\im)$ implication of assertion \eqref{primitive-nonsingular} (since a surface is non-singular iff the dual graph of the minimal resolution of singularity is non-empty) and complete the proof of the theorem. \\

Since the initial exponent of $\psi$ is in the normal form, it follows that $\deg_x(\psi) < 1$. We now divide our proof based on different possibilities for $\deg_x(\psi)$. For each case we construct the minimal resolution $\tilde X^{min}$ of singularities of $\bar X$ and show that the exceptional divisor of the morphism $\tilde X^{min} \to \bar X$ is of the required form.\\

\paragraph{\bf Case 1: $\deg_x(\psi) = 1/n$, $n \geq 2$.} In this case $\psi = \xi x^{1/n}$. Consequently \eqref{puiseux-param} implies that a generic curvette $\gamma$ associated to $C$ has Puiseux expansion near $P$ of the form
\begin{align*} 
u = \xi' v^{n/(n-1)} + \hot 
\end{align*}
for a generic $\xi' \in \cc$. Let $\bar X_0 = \Xxy, \bar X_1, \ldots $ be the sequence of surfaces constructed in the resolution Algorithm \ref{alg-resolution}. Then it follows that the strict transform of $\gamma$ on $\bar X_i$ has a Puiseux expansion of the form 
\begin{align*} 
v_i = \xi' u_i^{n-i} + \hot 
\end{align*}
where $(u_i,v_i) := (u/v, v^i/u^{i-1})$. In particular, the bimeromorphic correspondence $\bar X \dashrightarrow \bar X_i$ maps $C$ to the point $(u_i,v_i) = 0$ for $i < n$, and dominantly on to the line $u_n = 0$ (which is precisely the exceptional divisor of the last blow up) for $i = n$. It follows that $\tilde X = \bar X_n$ is precisely the resolution of singularity of $\bar X$ achieved via algorithm \ref{alg-resolution} with the augmented dual graph as in figure \ref{fig:graph-1/n}. 
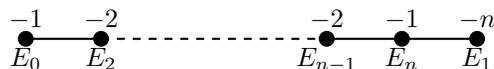
\begin{figure}[htp]

\begin{center}
\begin{tikzpicture}
 	\pgfmathsetmacro\sedge{1}
 	\pgfmathsetmacro\dashedge{3}
 	
 	\fill[black] (0, 0) circle (3pt);
 	\fill[black] (\sedge, 0) circle (3pt);
 	\fill[black] (\sedge + \dashedge, 0) circle (3pt);
 	\fill[black] (2*\sedge + \dashedge, 0) circle (3pt);
 	\fill[black] (3*\sedge + \dashedge, 0) circle (3pt);
 	
 	\draw[thick] (0,0) -- (\sedge,0);
 	\draw[thick, dashed] (\sedge,0) -- (\sedge + \dashedge,0);
 	\draw[thick] (\sedge+\dashedge,0) -- (2*\sedge + \dashedge,0);
 	\draw[thick] (2*\sedge+\dashedge,0) -- (3*\sedge + \dashedge,0);
 	
	\draw (0,0)  node (e0-up) [above] {$-1$};
	\draw (0,0)  node (e0-down) [below] {$E_0$};
	\draw (\sedge,0)  node (e2-up) [above] {$-2$};
	\draw (\sedge,0)  node (e2-down) [below] {$E_2$};
	\draw (\sedge + \dashedge,0)  node (e-n-1-up) [above] {$-2$};
	\draw (\sedge + \dashedge,0)  node (e-n-1-down) [below] {$E_{n-1}$};
	\draw (2*\sedge + \dashedge,0)  node (e-n-up) [above] {$-1$};
	\draw (2*\sedge + \dashedge,0)  node (e-n-down) [below] {$E_n$};
	\draw (3*\sedge + \dashedge,0)  node (e1-up) [above] {$-n$};
	\draw (3*\sedge + \dashedge,0)  node (e1-down) [below] {$E_1$};
\end{tikzpicture}
\caption{Augmented dual graph for resolution when $\deg_x(\psi_\nu) = 1/n$, $n \geq 2$}\label{fig:graph-1/n}
\end{center}
\end{figure}

Since $E_n$ is precisely the pre-image of $C$, it follows that the exceptional divisor of the resolution $\tilde \sigma: \tilde X \to \bar X$ is $\tilde E := E_0 \cup \cdots \cup E_{n-1}$. Note that $\tilde E$ has two connected components: $E_1$ and $\tilde E_1 := E_0 \cup E_2 \cup \cdots \cup E_{n-1}$. By Castelnuovo's criterion $\tilde X^{min}$ is formed from $\tilde X$ by contracting $\tilde E_1$ to a non-singular point. In particular, the exceptional divisor of the minimal resolution $\tilde X^{min} \to \bar X$ is precisely (the isomorphic image of) $E_1$. It is straightforward to check that this is precisely the form of the exceptional divisor prescribed by assertion \ref{primitive-singular-2} of the theorem.\\

\paragraph{\bf Case 2: $1 > \deg_x(\psi) > 1/2$.} In this case a generic curvette $\gamma$ associated to $C$ has Puiseux expansion near $P$ of the form
\begin{align*} 
u = a v^{\alpha} + \hot
\end{align*}
where $a \neq 0 \in \cc$ and $\alpha > 2$. It follows that Algorithm \ref{alg-resolution} requires at least $3$ blow ups, and strict transforms of $L_\infty$ contain the centers of at least the first three blow ups. In particular, the intersection number of the strict transform of $L_\infty$ on $\tilde X$ (which is precisely negative of the label of the vertex $e_0$ in figure \ref{fig:primitive-resolution-graph}) is $\leq -2$. This implies that all the irreducible curves with support in the exceptional divisor of $\tilde \sigma: \tilde X \to \bar X$ has self intersection $\leq -2$. Consequently, $\tilde \sigma$ is precisely the minimal resolution of singularities of $\bar X$ and assertion \ref{primitive-singular-1} of the theorem holds. \\ 

\paragraph{\bf Case 3: $0 < \deg_x(\psi) < 1/2$, $\deg_x(\psi) \neq 1/n$ for all $n \in \zz$.} The hypothesis of this case implies that a generic curvette $\gamma$ associated to $C$ has Puiseux expansion near $P$ of the form
\begin{align*} 
u = a v^{\alpha} + \hot
\end{align*}
where $a \neq 0 \in \cc$ and $1 < \alpha < 2$ such that $\alpha \neq (n+1)/n$ for all $n \geq 1$. Note that $\alpha = p_1/\tilde q_1$ where $\tilde q_1$ is as in \eqref{puiseux-param}. In particular $p_1, \tilde q_1$ are integers with no common factors. Let us follow the steps of the computation of $\gcd(p_1, \tilde q_1) = 1$ via Euclidean algorithm. The assumptions on $\alpha$ translate to the following observations: 
\begin{align*}
p_1 &= \tilde q_1 + r_1 && \text{for some}\ r_1 \in \zz,\ 1 < r_1 < q_1,\ \text{and}\\ 
\tilde q_1 &= m_1r_1 + r_2 && \text{for some}\ m_1, r_2 \in \zz,\ 1 \leq m_1,\ 1 \leq r_2 < r_1.\\ 
\intertext{Let the next step of the computation of $\gcd(p_1,q_1)$ be}
r_1 &= m_2r_2 + r_3 && \text{for some}\ m_2 > 0,\ \text{and}\ 0 \leq r_3 < r_2.
\end{align*}
Then straightforward arguments as in Case 1 shows that after $m_1 + m_2 +1$ blow-ups the dual graph of the union of strict transforms of $E_i$'s for $1 \leq i \leq m_1 + m_2+1$ on $\bar X_{m_1+m_2+1}$ is as in figure \ref{fig:euclid-3-0}, and the Puiseux expansion for the strict transform $\gamma_{m_1+m_2+1}$ of $\gamma$ on $\bar X_{m_1 +m_2+1}$ is given by:
$$u_{m_1+m_2+1} = a'(v_{m_1+m_2+1})^{r_3/r_2} + \hot \quad \text{for some}\ a' \neq 0 \in \cc.$$
where $(u_{m_1+m_2+1},v_{m_1+m_2+1}) = (u^{1+ m_1m_2}/v^{1+ m_2 + m_1m_2}, v^{m_1+1}/u^{m_1})$. Moreover, $u_{m_1 + m_2 + 1} = 0$ and $v_{m_1 + m_2 + 1} = 0$ are respectively the local equations of the strict transform of $E_{m_1+1}$ and $E_{m_1+m_2 +1}$ near $\gamma_{m_1+m_2+1}$. We divide the rest of the proof for this case into the following two subcases:\\

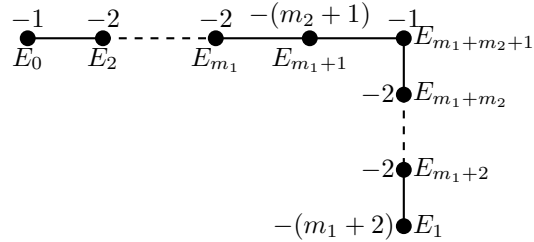
\begin{figure}[htp]

\begin{center}

\begin{tikzpicture}
 	\pgfmathsetmacro\sedge{1}
 	\pgfmathsetmacro\dashedge{1.5}
 	\pgfmathsetmacro\edge{1.25}
 	\pgfmathsetmacro\vedge{.75}
 	\pgfmathsetmacro\dashedvedge{1}
 	
 	\fill[black] (0, 0) circle (3pt);
 	\fill[black] (\sedge,0) circle (3pt);
 	\fill[black] (\sedge + \dashedge, 0) circle (3pt);
 	\fill[black] (\sedge + \edge + \dashedge, 0) circle (3pt);
 	\fill[black] (\sedge + 2*\edge + \dashedge, 0) circle (3pt);
 	\fill[black] (\sedge + 2*\edge + \dashedge, -\vedge) circle (3pt);
 	\fill[black] (\sedge + 2*\edge + \dashedge, -\vedge - \dashedvedge) circle (3pt);
 	\fill[black] (\sedge + 2*\edge + \dashedge, -2*\vedge - \dashedvedge) circle (3pt);
 	
 	\draw[thick] (0,0) -- (\sedge,0);
 	\draw[thick, dashed] (\sedge,0) -- (\sedge + \dashedge,0);
 	\draw[thick] (\sedge + \dashedge,0) -- (\sedge + 2*\edge + \dashedge,0);
 	\draw[thick] (\sedge + 2*\edge + \dashedge,0) -- (\sedge + 2*\edge + \dashedge, -\vedge);
 	\draw[thick, dashed] (\sedge + 2*\edge + \dashedge,-\vedge) -- (\sedge + 2*\edge + \dashedge, -\vedge - \dashedvedge);
 	\draw[thick] (\sedge + 2*\edge + \dashedge,-\vedge - \dashedvedge) -- (\sedge + 2*\edge + \dashedge, -2*\vedge - \dashedvedge);
 	
 	\draw (0,0)  node (E-0-up) [above] {$-1$};
	\draw (0,0)  node (E-0-down) [below] {$E_0$};
	\draw (\sedge,0)  node (E-2-up) [above] {$-2$};
	\draw (\sedge,0)  node (E-2-down) [below] {$E_2$};
	\draw (\sedge + \dashedge,0)  node (E-m1-up) [above] {$-2$};
	\draw (\sedge + \dashedge,0)  node (E-m1-down) [below] {$E_{m_1}$};
	\draw (\sedge + \edge + \dashedge,0)  node (E-m1+1-up) [above] {$-(m_2 + 1)$};
	\draw (\sedge + \edge + \dashedge,0)  node (E-m1+1-down) [below] {$E_{m_1+1}$};
	\draw (\sedge + 2*\edge + \dashedge,0)  node (E-m1+m2+1-up) [above] {$-1$};
	\draw (\sedge + 2*\edge + \dashedge,0)  node (E-m1+m2+1-down) [right] {$E_{m_1+m_2+1}$};
	
	\draw (\sedge + 2*\edge + \dashedge, -\vedge) node (E-m1+m2-up) [left] {$-2$};
	\draw (\sedge + 2*\edge + \dashedge, -\vedge) node (E-m1+m2-down) [right] {$E_{m_1+m_2}$};	
	\draw (\sedge + 2*\edge + \dashedge, -\vedge - \dashedvedge) node (E-m1+2-up) [left] {$-2$};
	\draw (\sedge + 2*\edge + \dashedge, -\vedge - \dashedvedge) node (E-m1+2-down) [right] {$E_{m_1 + 2}$};
	\draw (\sedge + 2*\edge + \dashedge, -2*\vedge - \dashedvedge) node (E-1-up) [left] {$-(m_1+2)$};
	\draw (\sedge + 2*\edge + \dashedge, -2*\vedge - \dashedvedge)  node (E-1-down) [right] {$E_{1}$};
	
\end{tikzpicture}

\caption{Dual graph of $E_0 \cup \cdots \cup E_{m_1+m_2+1}$ after $m_1+m_2+1$ blow-ups} \label{fig:euclid-3-0}
\end{center}
\end{figure}

\paragraph{\bf Subcase 3.1: $r_3 = 0$.} Since $a' \neq 0$, this implies that $\gamma_{m_1 +m_2+1}$ does {\em not} belong to the strict transform of $E_{m_1+1}$ on $\bar X_{m_1+m_2+1}$. It follows from Algorithm \ref{alg-resolution} all the remaining blow-ups for the construction of $\tilde X$ keep (the strict transforms of) $E_0,\ldots, E_{m_1+1}$ unchanged and the dual graph of the exceptional divisor of the morphism $\tilde X \to \bar X$ is of the form as in figure \ref{fig:euclid-3-1-1}. Moreover, $r_3=0$ implies that $r_2 = \gcd(p_1, q_1) = 1$, so that $m_2 = r_1 \geq 2$. The same arguments as in Case 1 then show that the dual graph of the exceptional divisor of the minimal resolution $\tilde X^{min} \to \bar X$ is of the form as in figure \ref{fig:euclid-3-1-2}. This is precisely the form of the dual graph prescribed by assertion \ref{primitive-singular-2} of the theorem.\\

\begin{figure}[htp]

\begin{center}
\subfigure[Non-minimal resolution]{
\label{fig:euclid-3-1-1}
\begin{tikzpicture}	
	[outline/.style={draw=#1,fill=#1!10}]

 	\pgfmathsetmacro\sedge{1}
 	\pgfmathsetmacro\edge{1.25}
 	\pgfmathsetmacro\nedge{2}
 	\pgfmathsetmacro\dashedge{1.5}
 	\pgfmathsetmacro\vedge{.75}
 	\pgfmathsetmacro\dashedvedge{1}
 	\pgfmathsetmacro\vnedge{1}
 	
 	\fill[black] (0, 0) circle (3pt);
 	\fill[black] (\sedge,0) circle (3pt);
 	\fill[black] (\sedge + \dashedge, 0) circle (3pt);
 	\fill[black] (\sedge + \dashedge + \edge, 0) circle (3pt);
 	\fill[black] (\sedge + \dashedge + \edge + \nedge, -\vnedge) circle (3pt);
 	\fill[black] (\sedge + \dashedge + \edge + \nedge, -\vnedge - \dashedvedge) circle (3pt);
 	\fill[black] (\sedge + \dashedge + \edge + \nedge, -\vnedge - \dashedvedge - \vedge) circle (3pt);
 	
 	\draw[thick] (0,0) -- (\sedge,0);
 	\draw[thick, dashed] (\sedge,0) -- (\sedge + \dashedge,0);
 	\draw[thick] (\sedge + \dashedge,0) -- (\sedge + \dashedge + \edge,0);
 	\draw[thick] (\sedge + \dashedge + \edge,0) -- (\sedge + \dashedge + \edge + \nedge, 0);
 	\draw[thick] (\sedge + \dashedge + \edge + \nedge,0) -- (\sedge + \dashedge + \edge + \nedge, -\vnedge);
 	\draw[thick, dashed] (\sedge + \dashedge + \edge + \nedge,-\vnedge) -- (\sedge + \dashedge + \edge + \nedge, -\vnedge - \dashedvedge);
 	\draw[thick] (\sedge + \dashedge + \edge + \nedge,-\vnedge - \dashedvedge) -- (\sedge + \dashedge + \edge + \nedge, -\vnedge - \dashedvedge - \vedge);
 	
 	\draw (0,0)  node (E-0-up) [above] {$-1$};
	\draw (0,0)  node (E-0-down) [below] {$E_0$};
	\draw (\sedge,0)  node (E-2-up) [above] {$-2$};
	\draw (\sedge,0)  node (E-2-down) [below] {$E_2$};
	\draw (\sedge + \dashedge,0)  node (E-m1-up) [above] {$-2$};
	\draw (\sedge + \dashedge,0)  node (E-m1-down) [below] {$E_{m_1}$};
	\draw (\sedge + \edge + \dashedge,0)  node (E-m1+1-up) [above] {$-(m_2 + 1)$};
	\draw (\sedge + \edge + \dashedge,0)  node (E-m1+1-down) [below] {$E_{m_1+1}$};
	\draw (\sedge + \edge + \dashedge + \nedge,0)  node (the-rest) [outline=blue, text width = 1.75cm] {rest of\\ the graph};
	
	\draw (\sedge + \dashedge + \edge + \nedge, -\vnedge) node (E-m1+m2-up) [left] {$-2$};
	\draw (\sedge + \dashedge + \edge + \nedge, -\vnedge) node (E-m1+m2-down) [right] {$E_{m_1+m_2}$};	
	\draw (\sedge + \dashedge + \edge + \nedge, -\vnedge - \dashedvedge) node (E-m1+2-up) [left] {$-2$};
	\draw (\sedge + \dashedge + \edge + \nedge, -\vnedge - \dashedvedge) node (E-m1+2-down) [right] {$E_{m_1 + 2}$};
	\draw (\sedge + \dashedge + \edge + \nedge, -\vnedge - \dashedvedge -\vedge) node (E-1-up) [left] {$-(m_1+2)$};
	\draw (\sedge + \dashedge + \edge + \nedge, -\vnedge - \dashedvedge -\vedge)  node (E-1-down) [right] {$E_{1}$};
\end{tikzpicture}
}
\subfigure[Minimal resolution]{
\label{fig:euclid-3-1-2}
\begin{tikzpicture}	
	[outline/.style={draw=#1,fill=#1!10}]

 	\pgfmathsetmacro\sedge{1}
 	\pgfmathsetmacro\edge{1.25}
 	\pgfmathsetmacro\nedge{2}
 	\pgfmathsetmacro\dashedge{1.5}
 	\pgfmathsetmacro\vedge{.75}
 	\pgfmathsetmacro\dashedvedge{1}
 	\pgfmathsetmacro\vnedge{1}
 	
 	\begin{scope}[shift={(-\sedge - \dashedge - \edge,0)}]
 	
 	\fill[black] (\sedge + \dashedge + \edge, 0) circle (3pt);
 	\fill[black] (\sedge + \dashedge + \edge + \nedge, -\vnedge) circle (3pt);
 	\fill[black] (\sedge + \dashedge + \edge + \nedge, -\vnedge - \dashedvedge) circle (3pt);
 	\fill[black] (\sedge + \dashedge + \edge + \nedge, -\vnedge - \dashedvedge - \vedge) circle (3pt);
 	
 	\draw[thick] (\sedge + \dashedge + \edge,0) -- (\sedge + \dashedge + \edge + \nedge, 0);
 	\draw[thick] (\sedge + \dashedge + \edge + \nedge,0) -- (\sedge + \dashedge + \edge + \nedge, -\vnedge);
 	\draw[thick, dashed] (\sedge + \dashedge + \edge + \nedge,-\vnedge) -- (\sedge + \dashedge + \edge + \nedge, -\vnedge - \dashedvedge);
 	\draw[thick] (\sedge + \dashedge + \edge + \nedge,-\vnedge - \dashedvedge) -- (\sedge + \dashedge + \edge + \nedge, -\vnedge - \dashedvedge - \vedge);

	\draw (\sedge + \edge + \dashedge,0)  node (E-m1+1-up) [above] {$-m_2$};
	\draw (\sedge + \edge + \dashedge,0)  node (E-m1+1-down) [below] {$E_{m_1+1}$};
	\draw (\sedge + \edge + \dashedge + \nedge,0)  node (the-rest) [outline=blue, text width = 1.75cm] {rest of\\ the graph};
	
	\draw (\sedge + \dashedge + \edge + \nedge, -\vnedge) node (E-m1+m2-up) [left] {$-2$};
	\draw (\sedge + \dashedge + \edge + \nedge, -\vnedge) node (E-m1+m2-down) [right] {$E_{m_1+m_2}$};	
	\draw (\sedge + \dashedge + \edge + \nedge, -\vnedge - \dashedvedge) node (E-m1+2-up) [left] {$-2$};
	\draw (\sedge + \dashedge + \edge + \nedge, -\vnedge - \dashedvedge) node (E-m1+2-down) [right] {$E_{m_1 + 2}$};
	\draw (\sedge + \dashedge + \edge + \nedge, -\vnedge - \dashedvedge -\vedge) node (E-1-up) [left] {$-(m_1+2)$};
	\draw (\sedge + \dashedge + \edge + \nedge, -\vnedge - \dashedvedge -\vedge)  node (E-1-down) [right] {$E_{1}$};

	\end{scope}
\end{tikzpicture}
}
\caption{Dual graphs of the exceptional divisor of the resolution of singularities of $\bar X$ for the case $r_3 = 0$} \label{fig:euclid-3-1}
\end{center}
\end{figure}

\paragraph{\bf Subcase 3.2. $r_3 > 0$:} In this case $\gamma_{m_1 + m_2 +1}$ intersects the point $P_{m_1+m_2+1}$ of intersection of $E_{m_1 + m_2+1}$ and the strict transform of $E_{m_1+1}$ on $\bar X_{m_1+m_2+1}$. It follows that the bimeromorphic correspondence $\bar X \dashrightarrow \bar X_{m_1+m_2+1}$ maps $C$ to $P_{m_1+m_2+1}$ and therefore Algorithm \ref{alg-resolution} requires at least one more blow up to construct $\tilde X$. The dual graph of the union of strict transforms of $E_i$'s for $1 \leq i \leq m_1 + m_2+2$ on $\bar X_{m_1+m_2+2}$ is as in figure \ref{fig:euclid-3-2-1}. Also, since $r_3 < r_2$, it follows that the strict transform of $\gamma$ on $\bar X_{m_1 +m_2+2}$ does {\em not} intersect the strict transform of $E_{m_1+1}$, and the same reasoning as in Subcase 3.1 then implies that the dual graph of the exceptional divisor of the minimal resolution $\tilde X^{min} \to \bar X$ is of the form as in figure \ref{fig:euclid-3-2-2}. It is straightforward to check that this agrees with assertion \ref{primitive-singular-2}, which completes the proof of the theorem. 
\end{proof}

\begin{figure}[htp]
\begin{center}

\subfigure[$E_0 \cup \cdots \cup E_{m_1+m_2+2}$ after $m_1+m_2+2$ blow-ups]{
\label{fig:euclid-3-2-1}
\begin{tikzpicture}
 	\pgfmathsetmacro\sedge{1}
 	\pgfmathsetmacro\dashedge{1.5}
 	\pgfmathsetmacro\edge{1.4}
 	\pgfmathsetmacro\vedge{.75}
 	\pgfmathsetmacro\dashedvedge{1}
 	
 	\fill[black] (0, 0) circle (3pt);
 	\fill[black] (\sedge,0) circle (3pt);
 	\fill[black] (\sedge + \dashedge, 0) circle (3pt);
 	\fill[black] (\sedge + \edge + \dashedge, 0) circle (3pt);
 	\fill[black] (\sedge + 2*\edge + \dashedge, 0) circle (3pt);
 	\fill[black] (\sedge + 3*\edge + \dashedge, 0) circle (3pt);
 	\fill[black] (\sedge + 3*\edge + \dashedge, -\vedge) circle (3pt);
 	\fill[black] (\sedge + 3*\edge + \dashedge, -\vedge - \dashedvedge) circle (3pt);
 	\fill[black] (\sedge + 3*\edge + \dashedge, -2*\vedge - \dashedvedge) circle (3pt);
 	
 	\draw[thick] (0,0) -- (\sedge,0);
 	\draw[thick, dashed] (\sedge,0) -- (\sedge + \dashedge,0);
 	\draw[thick] (\sedge + \dashedge,0) -- (\sedge + \edge + \dashedge,0);
 	\draw[thick] (\sedge + \edge + \dashedge,0) -- (\sedge + 2*\edge + \dashedge,0);
 	\draw[thick] (\sedge + 2*\edge + \dashedge,0) -- (\sedge + 3*\edge + \dashedge, 0);
 	\draw[thick] (\sedge + 3*\edge + \dashedge,0) -- (\sedge + 3*\edge + \dashedge, -\vedge);
 	\draw[thick, dashed] (\sedge + 3*\edge + \dashedge,-\vedge) -- (\sedge + 3*\edge + \dashedge, -\vedge - \dashedvedge);
 	\draw[thick] (\sedge + 3*\edge + \dashedge,-\vedge - \dashedvedge) -- (\sedge + 3*\edge + \dashedge, -2*\vedge - \dashedvedge);
 	
 	\draw (0,0)  node (E-0-up) [above] {$-1$};
	\draw (0,0)  node (E-0-down) [below] {$E_0$};
	\draw (\sedge,0)  node (E-2-up) [above] {$-2$};
	\draw (\sedge,0)  node (E-2-down) [below] {$E_2$};
	\draw (\sedge + \dashedge,0)  node (E-m1-up) [above] {$-2$};
	\draw (\sedge + \dashedge,0)  node (E-m1-down) [below] {$E_{m_1}$};
	\draw (\sedge + \edge + \dashedge,0)  node (E-m1+1-up) [above] {$-(m_2 + 2)$};
	\draw (\sedge + \edge + \dashedge,0)  node (E-m1+1-down) [below] {$E_{m_1+1}$};
	\draw (\sedge + 2*\edge + \dashedge,0)  node (E-m1+m2+2-up) [above] {$-1$};
	\draw (\sedge + 2*\edge + \dashedge,0)  node (E-m1+m2+2-down) [below] {$E_{m_1+m_2+2}$};
	\draw (\sedge + 3*\edge + \dashedge,0)  node (E-m1+m2+1-up) [above] {$-2$};
	\draw (\sedge + 3*\edge + \dashedge,0)  node (E-m1+m2+1-down) [right] {$E_{m_1+m_2+1}$};
	
	\draw (\sedge + 3*\edge + \dashedge, -\vedge) node (E-m1+m2-up) [left] {$-2$};
	\draw (\sedge + 3*\edge + \dashedge, -\vedge) node (E-m1+m2-down) [right] {$E_{m_1+m_2}$};	
	\draw (\sedge + 3*\edge + \dashedge, -\vedge - \dashedvedge) node (E-m1+2-up) [left] {$-2$};
	\draw (\sedge + 3*\edge + \dashedge, -\vedge - \dashedvedge) node (E-m1+2-down) [right] {$E_{m_1 + 2}$};
	\draw (\sedge + 3*\edge + \dashedge, -2*\vedge - \dashedvedge) node (E-1-up) [left] {$-(m_1+2)$};
	\draw (\sedge + 3*\edge + \dashedge, -2*\vedge - \dashedvedge)  node (E-1-down) [right] {$E_{1}$};
	
\end{tikzpicture}
}
\subfigure[Exceptional divisor of the minimal resolution]{
\label{fig:euclid-3-2-2}
\begin{tikzpicture}	
	[outline/.style={draw=#1,fill=#1!10}]

 	\pgfmathsetmacro\sedge{1}
 	\pgfmathsetmacro\edge{1.25}
 	\pgfmathsetmacro\nedge{2}
 	\pgfmathsetmacro\dashedge{1.5}
 	\pgfmathsetmacro\vedge{.75}
 	\pgfmathsetmacro\dashedvedge{1}
 	\pgfmathsetmacro\vnedge{1}
 	
 	\begin{scope}[shift={(-\sedge - \dashedge - \edge,0)}]
 	
 	\fill[black] (\sedge + \dashedge + \edge, 0) circle (3pt);
 	\fill[black] (\sedge + \dashedge + \edge + \nedge, -\vnedge) circle (3pt);
 	\fill[black] (\sedge + \dashedge + \edge + \nedge, -\vnedge - \dashedvedge) circle (3pt);
 	\fill[black] (\sedge + \dashedge + \edge + \nedge, -\vnedge - \dashedvedge - \vedge) circle (3pt);
 	
 	\draw[thick] (\sedge + \dashedge + \edge,0) -- (\sedge + \dashedge + \edge + \nedge, 0);
 	\draw[thick] (\sedge + \dashedge + \edge + \nedge,0) -- (\sedge + \dashedge + \edge + \nedge, -\vnedge);
 	\draw[thick, dashed] (\sedge + \dashedge + \edge + \nedge,-\vnedge) -- (\sedge + \dashedge + \edge + \nedge, -\vnedge - \dashedvedge);
 	\draw[thick] (\sedge + \dashedge + \edge + \nedge,-\vnedge - \dashedvedge) -- (\sedge + \dashedge + \edge + \nedge, -\vnedge - \dashedvedge - \vedge);

	\draw (\sedge + \edge + \dashedge,0)  node (E-m1+1-up) [above] {$-(m_2+1)$};
	\draw (\sedge + \edge + \dashedge,0)  node (E-m1+1-down) [below] {$E_{m_1+1}$};
	\draw (\sedge + \edge + \dashedge + \nedge,0)  node (the-rest) [outline=blue, text width = 1.75cm] {rest of\\ the graph};
	
	\draw (\sedge + \dashedge + \edge + \nedge, -\vnedge) node (E-m1+m2-up) [left] {$-2$};
	\draw (\sedge + \dashedge + \edge + \nedge, -\vnedge) node (E-m1+m2-down) [right] {$E_{m_1+m_2}$};	
	\draw (\sedge + \dashedge + \edge + \nedge, -\vnedge - \dashedvedge) node (E-m1+2-up) [left] {$-2$};
	\draw (\sedge + \dashedge + \edge + \nedge, -\vnedge - \dashedvedge) node (E-m1+2-down) [right] {$E_{m_1 + 2}$};
	\draw (\sedge + \dashedge + \edge + \nedge, -\vnedge - \dashedvedge -\vedge) node (E-1-up) [left] {$-(m_1+2)$};
	\draw (\sedge + \dashedge + \edge + \nedge, -\vnedge - \dashedvedge -\vedge)  node (E-1-down) [right] {$E_{1}$};

	\end{scope}
\end{tikzpicture}
}
\caption{Dual graphs for the case $r_3 > 0$} \label{fig:euclid-3-2}
\end{center}
\end{figure}

\begin{cor}[{\cite{remmert-van-de-ven}}]\label{rem-de-ven}
Up to an (analytic) isomorphism $\pp^2$ is the only smooth primitive compactification of $\cc^2$.
\end{cor}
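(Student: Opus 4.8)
The plan is to read off Corollary \ref{rem-de-ven} directly from Proposition \ref{primitive-singularity} and Lemma \ref{normal-lemma}, with essentially all the work having been done already. Let $\bar X$ be a smooth primitive compactification of $X = \cc^2$, and let $\nu$ be the divisorial discrete valuation on $\cc(x,y)$ associated with the irreducible curve $\bar X_\infty := \bar X \setminus X$ at infinity. First I would observe that, since $\nu$ is centered at infinity, some polynomial has negative $\nu$-value, so $\nu(x)$ and $\nu(y)$ cannot both be nonnegative; interchanging $x$ and $y$ if necessary (a Type I automorphism of $\cc[x,y]$), we may assume $\nu(x) < 0$, and then Theorem \ref{introductory-correspondence} furnishes a generic degree-wise Puiseux series $\tilde\psi_\nu(x,\xi)$ for $\nu$.

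Next I would invoke Lemma \ref{normal-lemma} to replace $(x,y)$ by new coordinates, obtained by composing finitely many Type I and Type II automorphisms of $\cc[x,y]$, so that the initial exponent of $\tilde\psi_\nu$ is in the normal form \eqref{normal-form}. The point to stress at this step is that such coordinate changes are automorphisms of the affine surface $X$: they alter only the open embedding $X \hookrightarrow \bar X$ and leave the isomorphism class of the compact surface $\bar X$ unchanged. Hence it is harmless to assume from the outset that $(x,y)$ is chosen so that the initial exponent of $\tilde\psi_\nu$ is in normal form, which is precisely the standing hypothesis of Proposition \ref{primitive-singularity}.

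Then Proposition \ref{primitive-singularity} applies, and since $\bar X$ is non-singular it gives $\tilde\psi_\nu = \xi x$. By identity \eqref{tilde-psi-nu-defn} this means $\nu(f) = \nu(x)\deg_x\bigl(f(x,\xi x)\bigr) = \nu(x)\deg_{(x,y)}(f)$ for every $f \in \cc[x,y]$, since no cancellation occurs when $\xi$ is treated as an indeterminate. As $\nu$ is an integer-valued valuation attached to an irreducible curve it is surjective onto $\zz$, which forces $\nu(x) = -1$ and hence $\nu = -\deg_{(x,y)}$. Finally, exactly as in the $(\Leftarrow)$ direction of the proof of Proposition \ref{primitive-singularity}, assertion \ref{non-regular} of Theorem \ref{surface-thm} shows that the natural bimeromorphic map $\sigma : \bar X \dashrightarrow \bar X^0 = \pp^2$ is an isomorphism, so $\bar X \cong \pp^2$.

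I do not expect any genuinely hard step here: the entire difficulty has been absorbed into Proposition \ref{primitive-singularity}. The two points that deserve an explicit word of care are the invariance of the isomorphism class of $\bar X$ under the coordinate changes supplied by Lemma \ref{normal-lemma}, and the routine verification that $\tilde\psi_\nu = \xi x$ translates, via \eqref{tilde-psi-nu-defn}, into $\nu = -\deg_{(x,y)}$ and hence $\bar X \cong \pp^2$.
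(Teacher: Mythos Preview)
Your argument is correct and follows essentially the same route as the paper's own proof: put the generic degree-wise Puiseux series into normal form via Lemma \ref{normal-lemma}, apply Proposition \ref{primitive-singularity} to get $\tilde\psi_\nu = \xi x$, and then invoke the $(\Leftarrow)$ direction of that proposition (i.e.\ assertion \ref{non-regular} of Theorem \ref{surface-thm}) to conclude $\bar X \cong \pp^2$. The extra details you spell out (why $\nu(x)<0$, why coordinate changes preserve the isomorphism class of $\bar X$, and the verification that $\tilde\psi_\nu=\xi x$ forces $\nu=-\deg_{(x,y)}$) are exactly what the paper leaves implicit.
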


\begin{proof}
This follows from combining the first assertions of Theorem \ref{priminimal-resolution-graph} and Proposition \ref{primitive-resolution-graph}.
\end{proof}

\begin{rem}
In \cite{remmert-van-de-ven} Remmert and Van de Ven essentially proved that compactifications of $\cc^2$ are algebraic spaces, i.e.\ Theorem \ref{moishezon-theorem} (which is essentially the point of departure of this article), and then used it to prove the result of Corollary \ref{rem-de-ven} by arguments different from ours. Our proof of Corollary \ref{rem-de-ven} therefore is in fact a new proof of the implication ``Theorem \ref{moishezon-theorem} $\im$ Corollary \ref{rem-de-ven}.''
\end{rem}


\begin{cor}[{\cite{jung}}] \label{jung}
The group of $\cc$-algebra automorphisms of the ring of complex polynomials in two variables is generated by linear automorphisms and {\em triangular} automorphisms (i.e.\ Type II automorphisms of Lemma \ref{normal-lemma}).
\end{cor}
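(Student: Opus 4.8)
The plan is to follow the recipe sketched in the introduction, letting Proposition~\ref{primitive-singularity} do the heavy lifting exactly as in the proof of Corollary~\ref{rem-de-ven}. Given a polynomial automorphism $G$ of $\cc[x,y]$, I would set $u := G(x)$ and $v := G(y)$; since $(u,v)$ is again a coordinate system on $X = \cc^2$, the map $P \mapsto [1:u(P):v(P)]$ realizes $\bar X := \pp^2$ as a compactification of $X$ whose curve at infinity is the line $\{z_0=0\}$, so $\bar X$ is a smooth \emph{primitive} compactification of $X$. Let $\nu$ be the valuation of $\cc(x,y)$ corresponding to this curve at infinity; unwinding the construction, $\nu(f) = -\deg_{(u,v)}(f)$ for every $f \in \cc[x,y] = \cc[u,v]$. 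In particular $\nu$ is centered at infinity ($x$ is a nonconstant element of $\cc[u,v]$, so $\nu(x) < 0$), and Theorem~\ref{introductory-correspondence} furnishes a generic degree-wise Puiseux series $\tilde\psi_\nu$ of $\nu$.

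Next I would normalize. Lemma~\ref{normal-lemma} produces a finite composition $F$ of Type~I automorphisms $(x,y)\mapsto(y,x)$ (which are linear) and Type~II automorphisms $(x,y)\mapsto(x,y+f(x))$ (which are triangular) so that, in the coordinates $(x',y') := (F(x),F(y))$, the initial exponent of $\tilde\psi_\nu$ is in the normal form \eqref{normal-form}. Passing to these coordinates leaves the surface $\bar X$, the embedding $X \hookrightarrow \bar X$, and the valuation $\nu$ untouched, so $\bar X$ is still a smooth primitive compactification of $X$ with $\nu$ as its valuation at infinity. As $\bar X = \pp^2$ is nonsingular, Proposition~\ref{primitive-singularity}, applied with $(x',y')$ in the role of $(x,y)$, forces $\tilde\psi_\nu = \xi x'$; by identity \eqref{tilde-psi-nu-defn} (as in the first paragraph of the proof of Proposition~\ref{primitive-singularity}) this means $\nu = -\deg_{(x',y')}$ on $\cc[x,y]$.

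It then remains to translate the equality $-\deg_{(x',y')} = \nu = -\deg_{(u,v)}$ of valuations into a statement about $G$. Evaluating both sides at $x'$ and $y'$ gives $\deg_{(u,v)}(x') = \deg_{(u,v)}(y') = 1$, hence $x' = \alpha u + \beta v + \gamma$ and $y' = \delta u + \epsilon v + \zeta$ for constants $\alpha,\dots,\zeta$; and since $(x',y')$ still generate $\cc[x,y] = \cc[u,v]$, the associated linear map is invertible. Let $A$ be the affine automorphism of $\cc[x,y]$ with $A(x) = \alpha x + \beta y + \gamma$ and $A(y) = \delta x + \epsilon y + \zeta$. Because the ring homomorphism $G$ is $\cc$-linear, $(G\circ A)(x) = \alpha G(x) + \beta G(y) + \gamma = x' = F(x)$ and likewise $(G\circ A)(y) = F(y)$, so $F = G \circ A$ and therefore $G = F \circ A^{-1}$. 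Now $F$ is a composition of linear and triangular automorphisms by construction, and the affine automorphism $A^{-1}$ also lies in the group they generate (its translation part $(x,y)\mapsto(x+c,y+d)$ is $(x,y)\mapsto(x,y+d)$ followed by the conjugate of the triangular map $(x,y)\mapsto(x,y+c)$ by the linear swap). Hence $G$ belongs to the subgroup generated by linear and triangular automorphisms, which is Jung's theorem.

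On difficulty: once Proposition~\ref{primitive-singularity} and Lemma~\ref{normal-lemma} are in hand, essentially nothing remains — the comparison of degree valuations and the decomposition of the affine group are routine. The one point that must be stated with care (and is the conceptual crux of the ``one shot'' claim) is the legitimacy of invoking those two results here: one must observe that replacing the algebraic coordinates of $X$ by the polynomial automorphism $F$ changes neither $\bar X$, nor the embedding $X \hookrightarrow \bar X$, nor the valuation $\nu$, so that the normal-form hypothesis of Proposition~\ref{primitive-singularity} is genuinely satisfied in the $(x',y')$-coordinates. Granted that, Jung's theorem drops out in the same stroke as the theorem of Remmert and Van de Ven.
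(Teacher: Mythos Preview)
Your proof is correct and follows essentially the same approach as the paper's own proof: construct the $\pp^2$-compactification via the given automorphism, normalize via Lemma~\ref{normal-lemma}, apply Proposition~\ref{primitive-singularity} to force $\tilde\psi_\nu = \xi x'$, and conclude that the residual change of coordinates is affine. The only differences are cosmetic---you swap the names $F$ and $G$ relative to the paper, and you spell out explicitly why affine automorphisms lie in the group generated by linear and triangular ones, which the paper leaves implicit.
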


\begin{proof}
Let $\Delta$ be the group of $\cc$-algebra automorphisms of $\cc[x',y']$ and $\Sigma$ be the subgroup of $\Delta$ generated by linear and triangular automorphisms. Pick $F  = (F_1,F_2) \in \Delta$. Set $(u,v) := (F_1(x',y'),F_2(x',y'))$. Let $\bar X \cong \pp^2$ be the compactification of $X := \spec \cc[x',y'] \cong \cc^2$ via the embedding $(x',y') \mapsto [1:F_1(x',y'):F_2(x',y')]$.
Lemma \ref{normal-lemma} implies that there exists $G = (G_1,G_2) \in \Delta$ such that initial exponent of the generic descending Puiseux series $\psi(x,\xi)$ of $C$ with respect to $(x,y) := (G_1(x',y'), G_2(x',y'))$ coordinates is in the normal form. Since $\bar X$ is non-singular, first assertions of Theorem \ref{priminimal-resolution-graph} and Proposition \ref{primitive-resolution-graph} imply that $\psi(x,\xi) = \xi x$ and $G \circ F^{-1}:\bar X \to \Xxy$ is an isomorphism. It follows that $G \circ F^{-1}$ is a non-invertible linear map in $(x,y)$-coordinates; in particular, $G \circ F^{-1} \in \Sigma$. Therefore $F \in \Sigma$, as required. 
\end{proof}

The following result is immediate from the arguments of the proof of Theorem \ref{priminimal-resolution-graph}. We will use it in the proof of Theorem \ref{singular-theorem}. 

\begin{cor} \label{primitive-corollary}
Let $\bar X$ be a primitive compactification of $X$. Choose coordinates $(x,y)$ on $X$ such that the initial exponent of the generic descending Puiseux series $\psi(x,\xi)$ of the curve $C$ at infinity is in the normal form. Then one of the following must hold:
\begin{enumerate}
\item $\bar X \cong \pp^2$, $C \cong \pp^1$.
\item $\sigmaxy^{-1}: \Xxy \dashrightarrow \bar X$ contracts $L_\infty$ to a point $P \in C$. In this case
\begin{enumerate}
\item either $P$ is a singular point of $\bar X$,
\item or $\bar X$ is isomorphic to the weighted projective space $\pp^2(1,1,n)$ for some $n \geq 1$, and $P$ is a non-singular point of both $\bar X$ and $C$.  \qed
\end{enumerate}
\end{enumerate}
\end{cor}

\section{Singularities and curves at infinity} \label{singular-section}

\begin{defn}
Let $\bar X$ be a compactification of $\cc^2$. We say that $\bar X$ is a {\em minimal compactification} (of $\cc^2$) if none of the curves at infinity can be (analytically) contracted.
\end{defn}

\begin{example} \label{minimally-non-singular}
Note that a minimal compactification of $\cc^2$ may {\em not} be a minimal surface. Indeed, let $\bar X_0 = \pp^2$. Pick a line $L_0$ on $\bar X_0$ and a point $Q \in L_0$. Fix $k \geq 1$. Choose points $P_1, \ldots, P_k \in \bar X_0 \setminus L_0$ such that the lines $L_i$ joining $P_i$ and $Q$, $1 \leq i \leq k$, are pairwise distinct. Let $\bar X_k$ be the blow up of $\bar X_0$ at $P_1, \ldots, P_k$. Let $C_i \subseteq \bar X_k$ be the strict transform of $L_i$, $0 \leq i \leq k$, and $X_k := \bar X_k \setminus \bigcup_{i=0}^k C_i$. Note that $X_0 \cong \cc^2$. It then follows by induction that $X_k \cong \cc^2$ (indeed, we need only the following observation: if $Y$ is the blow up of $\cc^2$ at a point $P$ and $L$ is a line through $P$ on $\cc^2$, then the complement in $Y$ of the strict transform of $L$ is also isomorphic to $\cc^2$). We claim that $\bar X_k$ is a minimal compactification of $X_k$. Indeed, the matrix of intersection numbers $(C_i, C_j)$ is:
\begin{align*}
\scrI = \begin{pmatrix}
1 & 1 & 1 & 1 & \cdots & 1 & 1 \\
1 & 0 & 1 & 1 & \cdots & 1 & 1 \\
1 & 1 & 0 & 1 & \cdots & 1 & 1 \\
\vdots &\vdots & \vdots & \vdots & \cdots & \vdots & \vdots \\
1 & 1 & 1 & 1 & \cdots & 0 & 1 \\
1 & 1 & 1 & 1 & \cdots & 1 & 0
\end{pmatrix}
\end{align*}
It then follows from Theorem \ref{greorem} that no $C_i$ can be contracted, i.e.\ $\bar X_k$ is a minimal compactification of $X_k$. Also note that the configuration of the curves at infinity is as in figure \ref{fig:minimal-configuration}. 
\end{example}

\begin{proof}[Proof of Theorem \ref{singular-theorem}]
Let $\sigma: \bar X \dashrightarrow \bar X_0 \cong \pp^2$ be the bimeromorphic correspondence induced by identification of $X$ with $\cc^2$. Algorithm \ref{alg-resolution} shows that a resolution of singularities $\tilde \sigma: \tilde X \to \bar X$ can be constructed from $\bar X_{0}$ via a sequence of blow-ups $\bar X_{i+1} \to \bar X_i$, with $\tilde X = \bar X_s$ for some $s \geq 1$. Let $E_0$ be the line at infinity on $\bar X_0$, and for each $i \geq 1$, let $E_i$ be the exceptional divisor of the $i$-th blow-up. Finally, for each $i \geq 0$, let $\Gamma_i$ be the {\em augmented dual graph} at the $i$-th step, i.e.\ $\Gamma_i$ is the dual graph for the union of strict transforms on $\bar X_i$ of $E_0, \ldots, E_i$. Then it is straightforward to see (see e.g.\ \cite[Remark 5.5]{spivaluations}) that for each $i$, there are only two possibilities for the transformation from $\Gamma_i$ to $\Gamma_{i+1}$ which are described in figure \ref{fig:dual-change}. In particular, it follows that for all $i \geq 1$,
\begin{enumerate}
\renewcommand{\theenumi}{\roman{enumi}}
\item \label{gamma-i-connected} $\Gamma_i$ is a {\em tree} (i.e.\ every pair of vertices is connected by a unique {\em minimal} path). 
\item \label{two-intersection} $E_i$ is connected to at most two distinct $E_j$'s in $\Gamma_i$; denote them as $E_{t_i}$ and $E_{t'_i}$ ($t_i = t'_i$ in the case of figure \ref{fig:dual-change-1}). (Here we used $i > 0$.) 
\item \label{position-0} Let $\tilde \Gamma_i$ (resp.\ $\tilde \Gamma'_i$) be the connected component of $\Gamma_s \setminus \{E_i\}$ which contains $E_{t_i}$ (resp.\ $E_{t'_i}$). Then the vertex corresponding to $E_0$ is in $\tilde \Gamma_i \cup \tilde \Gamma'_i$ (from now we will abuse the notation and simply write $E_0\in \tilde \Gamma_i \cup \tilde \Gamma'_i$). W.l.o.g.\ we assume that $E_0 \in \tilde \Gamma_i$.
\item \label{possible-contraction} Let $\tilde \Gamma$ be a connected component of $\Gamma_s \setminus \{E_i\}$ which does not contain $E_{t_i}$ or $E_{t'_i}$ and let $\tilde E := \bigcup\{E_j:E_j \in \tilde \Gamma\}$. Then there is a point $Q \in E_i$ such that $\tilde E$ is precisely the union of exceptional curves arising from blow-up of $Q$ and points infinitely close to $Q$. In particular, $\tilde E$ can be (analytically) contracted to a non-singular point and the image of $E_i$ under this contraction is also non-singular.  
\end{enumerate}

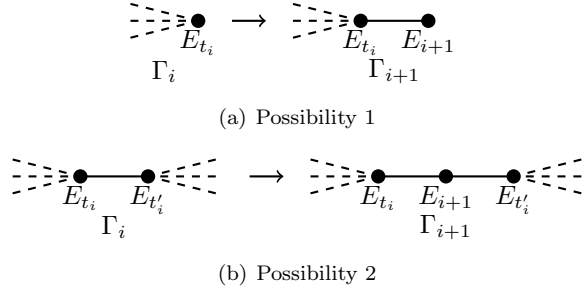
\begin{figure}[htp]
\subfigure[Possibility 1]{
\label{fig:dual-change-1}
\begin{tikzpicture}[scale=.9]
 	\pgfmathsetmacro\edge{1}
 	\pgfmathsetmacro\dashedge{1}
 	\pgfmathsetmacro\dashedheight{0.25}
 	
 	\pgfmathsetmacro\arrowleftsep{0.5}
 	\pgfmathsetmacro\arrowrightsep{0.4}
 	\pgfmathsetmacro\arrowlen{0.5}
 	
 	\pgfmathsetmacro\orleftsep{1}
 	\pgfmathsetmacro\orightsep{1}
 	
 	\pgfmathsetmacro\labelheight{-0.75}

 	\fill[black] (0, 0) circle (3pt);
 	\draw (0,0)  node (E-i-1-1) [below] {$E_{t_i}$};
 	\draw (-\dashedge/2,\labelheight)  node  {$\Gamma_i$};
 	
 	\draw[dashed, thick]  (-\dashedge,\dashedheight) -- (0,0);
 	\draw[dashed, thick]  (-\dashedge,0) -- (0,0);
 	\draw[dashed, thick]  (-\dashedge,-\dashedheight) -- (0,0);
 	
 	\draw[thick, ->] (\arrowleftsep,0) -- (\arrowleftsep + \arrowlen,0);
 	
 	\begin{scope}[shift={(\arrowleftsep + \arrowlen + \arrowrightsep + \dashedge,0)}]
 		
 		\fill[black] (0, 0) circle (3pt);
 		\draw (0,0)  node (E-i-1-2) [below] {$E_{t_i}$};
 	
 		\draw[dashed, thick]  (-\dashedge,\dashedheight) -- (0,0);
 		\draw[dashed, thick]  (-\dashedge,0) -- (0,0);
 		\draw[dashed, thick]  (-\dashedge,-\dashedheight) -- (0,0);
 		
 		\draw[thick] (0,0) -- (\edge,0);
 		
 		\fill[black] (\edge, 0) circle (3pt);
 		\draw (\edge,0)  node (E-i+1-1) [below] {$E_{i+1}$};
 		\draw (\edge/2,\labelheight)  node  {$\Gamma_{i+1}$};
 	
 	\end{scope}
 	\end{tikzpicture}
 }
 
\subfigure[Possibility 2]{
\label{fig:dual-change-2}
\begin{tikzpicture}[scale=0.9]
 	\pgfmathsetmacro\edge{1}
 	\pgfmathsetmacro\dashedge{1}
 	\pgfmathsetmacro\dashedheight{0.25}
 	
 	\pgfmathsetmacro\arrowleftsep{0.5}
 	\pgfmathsetmacro\arrowrightsep{0.4}
 	\pgfmathsetmacro\arrowlen{0.5}
 	
 	\pgfmathsetmacro\orleftsep{1}
 	\pgfmathsetmacro\orightsep{1}
 	
 	\pgfmathsetmacro\labelheight{-0.75} 	
 		
 		\fill[black] (0, 0) circle (3pt);
 		\draw (0,0)  node (E-i-2-1) [below] {$E_{t_i}$};
 	
 		\draw[dashed, thick]  (-\dashedge,\dashedheight) -- (0,0);
 		\draw[dashed, thick]  (-\dashedge,0) -- (0,0);
 		\draw[dashed, thick]  (-\dashedge,-\dashedheight) -- (0,0);
 		
 		\draw[thick] (0,0) -- (\edge,0);
 		\draw (\edge/2,\labelheight)  node  {$\Gamma_i$};
 		
 		\fill[black] (\edge, 0) circle (3pt);
 		\draw (\edge,0)  node (E-j-1) [below] {$E_{t'_i}$};
 		
 		\draw[dashed, thick]  (\edge + \dashedge,\dashedheight) -- (\edge,0);
 		\draw[dashed, thick]  (\edge + \dashedge,0) -- (\edge,0);
 		\draw[dashed, thick]  (\edge + \dashedge,-\dashedheight) -- (\edge,0);

 		\draw[thick, ->] (\edge + \dashedge + \arrowleftsep,0) -- (\edge + \dashedge + \arrowleftsep + \arrowlen,0);
 	
 	\begin{scope}[shift={(\arrowleftsep + \arrowlen + \arrowrightsep + 2*\dashedge + \edge,0)}]
 		
 		\fill[black] (0, 0) circle (3pt);
 		\draw (0,0)  node (E-i-2-1) [below] {$E_{t_i}$};
 	
 		\draw[dashed, thick]  (-\dashedge,\dashedheight) -- (0,0);
 		\draw[dashed, thick]  (-\dashedge,0) -- (0,0);
 		\draw[dashed, thick]  (-\dashedge,-\dashedheight) -- (0,0);
 		
 		\draw[thick] (0,0) -- (\edge,0);
 		
 		\fill[black] (\edge, 0) circle (3pt);
 		\draw (\edge,0)  node (E-i+1-2) [below] {$E_{i+1}$};
 		\draw (\edge,\labelheight)  node  {$\Gamma_{i+1}$};
 		
 		\draw[thick] (\edge,0) -- (2*\edge,0);
 		
 		\fill[black] (2*\edge, 0) circle (3pt);
 		\draw (2*\edge,0)  node (E-j-2) [below] {$E_{t'_i}$};
 		
 		\draw[dashed, thick]  (2*\edge + \dashedge,\dashedheight) -- (2*\edge,0);
 		\draw[dashed, thick]  (2*\edge + \dashedge,0) -- (2*\edge,0);
 		\draw[dashed, thick]  (2*\edge + \dashedge,-\dashedheight) -- (2*\edge,0);
 		 	
 	\end{scope} 	
 	
\end{tikzpicture}
}
\caption{Change of the augmented dual graph in $(i+1)$-th step} \label{fig:dual-change}
\end{figure}

\paragraph{\bf Proof of assertion \eqref{non-minimal-bound}:} $\bar X$ is constructed from $\tilde X$ by contracting some of the $E_i$'s. Let $E_{i_1}, \ldots, E_{i_k}$ be the non-contracted curves; w.l.o.g. we may assume $C_j = \tilde \sigma(E_{i_j})$, $ 1 \leq j \leq k$. Observation \eqref{possible-contraction} can be reformulated as:
\begin{enumerate}
\setcounter{enumi}{4}
\renewcommand{\theenumi}{\roman{enumi}}
\item \label{possible-contraction-2} Fix $j$, $1 \leq j \leq k$. Let $P \in \sing(\bar X) \cap C_j$. Then one of the following folds:
\begin{enumerate}
\item \label{singular-2} $P \in C_j \cap C_{j'}$ for some $j' \neq j$, 
\item \label{singular-0} $i_j \geq 1$, $\tilde \sigma$ contracts $\tilde \Gamma_{i_j} \ni E_0$, and $P = \tilde \sigma(E_0) = \tilde \sigma(\tilde \Gamma_{i_j})$. 
\item \label{singular-not-0} $i_j \geq 1$, $t_{i_j} \neq t'_{i_j}$, $\tilde \sigma$ contracts $\tilde \Gamma'_{i_j}$, and $P = \tilde \sigma(\tilde \Gamma'_{i_j})$. 
\end{enumerate} 
\end{enumerate}
Define
\begin{align}
\Sigma &:= \{i_j:\ \text{$\tilde \sigma$ contracts $\tilde \Gamma_{i_j} \cup \tilde \Gamma'_{i_j}$}\} \label{Sigma}\\
S &= \bigcup_{1 \leq j < j' \leq k}C_j \cap C_{j'} \label{S}
\end{align} 
Observation \eqref{gamma-i-connected} implies that $|S| \leq k-1$. If $\Sigma = \emptyset$, then observation \eqref{possible-contraction-2} implies that for all $j$, $1 \leq j \leq k$, $|\sing(\bar X) \cap C_j \setminus S| \leq 1$. It follows that $|\sing(\bar X)| \leq k + |S| + 1 \leq 2k$. On the other hand, if $\Sigma \neq \emptyset$, then observations \eqref{position-0} and \eqref{gamma-i-connected} imply that $\tilde \sigma$ contracts $E_0$ to some point $P_0 \in \bar X$ and $P_0$ is the unique point of intersection of all $C_j$ such that $i_j \in \Sigma$. Observation \eqref{possible-contraction-2} then implies that for all $j$, $1 \leq j \leq k$, $|\sing(\bar X) \cap C_j \setminus \left(S \cup \{P_0\}\right)| \leq 1$. It follows that $|\sing(\bar X)| \leq k + |S \cup \{P_0\}| \leq 2k$. This completes the proof of assertion \eqref{non-minimal-bound}.\\

\paragraph{\bf Proof of assertion \eqref{minimal-assertion}:} Since $\bar X$ is minimal, it follows from Theorem \ref{greorem} that either $\tilde \sigma$ contracts $E_0$ or $\bar X \cong \bar X_0$. W.l.o.g.\ we may assume the former. Consider the surface $\bar X'$ obtained from $\tilde X$ by contracting all curves at infinity other than the strict transforms of $C_1, \ldots, C_k$ and the line $E_0$ at infinity on $\bar X_0$ (which is possible e.g.\ by Theorem \ref{greorem}). The bimeromorphic correspondences $\pi': \bar X' \dashrightarrow \bar X$ and $\pi_0: \bar X' \dashrightarrow \bar X_{0}$ extend to holomorphic maps. In particular, for each $j$, $1 \leq j \leq k$, the strict transform $C'_j$ of $C_j$ on $\bar X'$ is contractible, so that $(C'_j, C'_j) < 0$. On the other hand, the minimality assumption on $\bar X$ and Theorem \ref{greorem} imply that $(C_j, C_j) \geq 0$ for all $j$, $1 \leq j \leq k$. Since $\pi'|_{\bar X'\setminus E'_0}$ (where $E'_0$ is the strict transform of $E_0$ on $\bar X'$) is an isomorphism, it follows that $E'_0$ intersects each $C'_j$, $1 \leq j \leq k$, so that $P := \pi'(E'_0) \in \bigcap_{j=1}^k C_j$. This, together with observation \eqref{gamma-i-connected} above, implies that $\pi'^{-1}(P) \cap C'_j$ consists of a single point $P'_j$. In particular this proves assertion \eqref{minimal-configuration} and implies that $S \cup \tilde \sigma(E_0) = \{P\}$, where $S$ is as in \eqref{S}. Observation \eqref{possible-contraction-2} then implies that $\left|\sing(\bar X) \setminus \{P\}\right| \leq k$, which is precisely assertion \eqref{minimal-singularities}. Now fix $j$, $1 \leq j \leq k$, and let $\pi'_j: \bar X' \to \bar X^*_j$ be the contraction of all $C'_i$, $i \neq j$. Then $\bar X^*_j$ is precisely the compactification $\bar X^*$ of Remark \ref{easy-2-remark} for $C = C_j$. Since $\pi'_j$ is an isomorphism on a neighborhood of $C'_j \setminus \{P'_j\}$, assertions \eqref{minimal-C-P} and \eqref{minimal-quotient} follows from Remark \ref{easy-2-remark}.\\

\paragraph{\bf Proof of assertion \eqref{non-minimal-types}:} At first note that if $\tilde \sigma$ does {\em not} contract $E_0$, then $\bar X$ dominates $\bar X_0$, and therefore {\em all} singularities of $\bar X$ are sandwiched. So assume $\bar \sigma$ contracts $\tilde E_0$ to a point $P \in \bar X$. Let $\bar X'$ be as in the preceding paragraph. Then $\bar X'$ dominates $\bar X_0$, and therefore all the singularities of $\bar X'$ are sandwiched. Since $\bar X\setminus \{P\} \cong \bar X' \setminus E'_0$, this implies assertion \eqref{non-minimal-types}. \\

\paragraph{\bf Proof of assertion \eqref{curve-singularity}:} At first note that if $C_j$ is the image of $E_0$, then $C_j \cong \pp^1$ (since then the birational map $\bar X \to \bar X_0$ maps $C_j$ on to $L_\infty$). So assume that $E_0$ does {\em not} map on to $C_j$. Let $\bar X^*_j$ be as in the proof of assertion \eqref{minimal-assertion} and $\tilde \pi^*_j: \tilde X \to \bar X^*_j$ be the corresponding map. Recall that $E_{i_j}$ is the strict transform of $C_j$ on $\tilde X$. Let $\tilde Q_j$ be the point of intersection of $E_{i_j}$ and $\tilde \Gamma_{i_j}$ (where $\tilde \Gamma_i$'s are as in observation \eqref{position-0}). Then $\tilde \pi^*_j(\tilde Q_j)$ is precisely the point of intersection of the two curves at infinity on $\bar X^*_j$. Since the bimeromorphic correspondence $\bar X \dashrightarrow \bar X^*_j$ restricts to a holomorphic map on a neighborhood of $C_j\setminus \tilde \sigma(\tilde Q_j)$, assertion \eqref{curve-singularity-1} follows from Remark \ref{easy-2-remark}. \\


It remains to prove assertion \eqref{curve-singularity-2}. Let $Q$ be a singular point of $C_j$ such that $Q \in C_j \setminus \bigcup_{i \neq j}C_i$. Recall that our proof of Theorem \ref{singular-theorem} started with the choice of an {\em arbitrary} compactification $\bar X_0$ of $X$ which is isomorphic to $\pp^2$. Now we choose coordinates $(x,y)$ on $X$ such that the initial exponent of the generic descending Puiseux series associated to $C_j$ is in the {\em normal form}, and set $\bar X_0 = \Xxy$ and $\sigma_0 = \sigmaxy$. The arguments in the preceding paragraph imply that $Q = \tilde \sigma(\tilde Q_j)$ and $\tilde \sigma$ contracts $E_0$ to $Q$. Since $Q \in \bar X \setminus \bigcup_{i\neq j}C_i$, this in turn implies that the bimeromorphic correspondence $\bar X \dashrightarrow \bar X^*_j$ restricts to a holomorphic map on a neighborhood of $E^*_0 := \tilde \pi^*_j(E_0)$. In particular, this implies that $E^*_0$ is analytically contractible. Let $\mu^*_j: \bar X^*_j \to Z$ be the contraction of $E^*_0$. Then $Z$ is a primitive compactification of $X$, and $\mu^*_j$ induces a holomorphic map $\mu_j: \bar X \to Z$ such that $Z\setminus X = \mu_j(C_j)$ and $\mu_j$ is an isomorphism near $Q$. Assertion \eqref{curve-singularity-2} now follows from Corollary \ref{primitive-corollary}. 
\end{proof}

\begin{example}[Compactifications with maximal number of singular points] \label{sharp-example-1}
Pick relatively prime integers $p, q > 1$ and let $\bar X_0$ be the weighted projective surface $\pp^2(1,p,q)$, so that $\bar X_0$ is a compactification of $\cc^2$ with two singular points at infinity. Pick $P \in C := \bar X_0 \setminus X$ such that $\bar X_0$ is non-singular at $P$. Then perform a sequence of $3$ blow-ups as follows: at first blow up $\bar X_0$ at $P$, then blow up the resulting surface at a point on the exceptional divisor $E_1$ which is not on the strict transform of $C$, and then blow up the point of intersection of the new exceptional divisor $E_2$ and the strict transform of $E_1$. This produces a compactification of $\cc^2$ with the dual graph of the union of the curves at infinity as in Figure \ref{fig:sharp-1-configuration}.\\

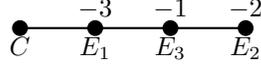
\begin{figure}[htp]
\begin{center}
\begin{tikzpicture}
 	\pgfmathsetmacro\edge{1}
 	
 	\fill[black] (0, 0) circle (3pt);
 	\draw (0,0)  node (C) [below] {$C$};
 	
 	\fill[black] (\edge, 0) circle (3pt);
 	\draw (\edge,0)  node (E1) [below] {$E_1$};
 	\draw (\edge,0)  node (E1-int) [above] {$-3$};
 	
 	\fill[black] (2*\edge, 0) circle (3pt);
 	\draw (2*\edge,0)  node (E3) [below] {$E_3$};
 	\draw (2*\edge,0)  node (E3-int) [above] {$-1$};
 	
 	\fill[black] (3*\edge, 0) circle (3pt);
 	\draw (3*\edge,0)  node (E2) [below] {$E_2$};
 	\draw (3*\edge,0)  node (E2-int) [above] {$-2$};
 	
 	\draw[thick]  (0,0) -- (3*\edge,0);
 	
\end{tikzpicture}
\caption{Construction of $\bar X$ such that $|\sing(\bar X)|$ is maximal} \label{fig:sharp-1-configuration}
\end{center}
\end{figure}
It follows that blowing down $E_1$ and $E_2$ produces a compactification of $\cc^2$ with $2$ irreducible curves and $4$ singular points at infinity. For each $k \geq 1$, applying this procedure to $k$ distinct points on $C \setminus (\sing \bar X_0)$ produces a compactification of $\cc^2$ with $k+1$ irreducible curves and $2(k+1)$ singular points at infinity.
\end{example}

\section{Intersection numbers of curves at infinity} \label{inter-section}

\begin{proof}[Proof of Theorem \ref{intersection-thm}]
Since each $\nu_j$ is centered at infinity, it follows that there exists a compactification $\bar X_j$ of $X$ such that $\nu_j$ is the order of vanishing along a curve $C'_j$ at infinity on $\bar X_j$. By assumption we can assume $\bar X_1 \cong \pp^2$. Let $\tilde X$ be the simultaneous resolution of singularities of $\bar X_j$, $1 \leq j \leq k$. Let $\tilde C_j$ be the strict transform of $C'_j$ on $\tilde X$. Let $\tilde E_1$ be the union of the exceptional curves of the map $\tilde \sigma_1: \tilde X \to \bar X_1$ and let $\tilde E$ be the union of all curves in $\tilde E_1$ which are different from $\tilde C_2, \ldots, \tilde C_k$. Since $\tilde E_1$ is contractible, it follows that $\tilde E$ is also contractible. Let $\tilde \sigma: \tilde X \to \bar X$ be the contraction of $\tilde E$. Then $\bar X$ is precisely the compactification Question \ref{existential-question} asks for. Since $\tilde \sigma_1$ factors through $\tilde \sigma$, it follows that every singularity of $\bar X$ is {\em sandwiched}, and therefore {\em rational} \cite[Remark 1.15]{spinash}. A criterion of Artin \cite[Theorem 2.3]{artractability} then shows that $\bar X$ is projective. This completes the proof of assertions \eqref{easy-existence} and \eqref{easy-rational} of Theorem \ref{intersection-thm}.\\

We now prove assertion \eqref{easy-intersection}. Remark \ref{mremark} shows that $m_{ij} = -\nu_i(g_{\nu_j}(x,y,\tilde \xi))$ for generic $\tilde \xi \in \cc$, where $g_{\nu_j}$ is the {\em generic key form} of $\nu_j$. For all $\tilde \xi \in \cc$, let $D_{1,\tilde \xi}$ be the closure in $\bar X$ of the curve $g_{\nu_1}(x,y,\tilde \xi) = 0$. Recall (from Example \ref{degrexample}) that $D_{1,\tilde \xi}$ is a line with `slope' $\tilde \xi$. Therefore for generic $\tilde \xi$, $D_{1,\tilde \xi}$ intersects $C_0$ transversally at one point and does not intersect any $C_j$ for $j \geq 1$. Since the Weil divisor on $\bar X$ of $g_{\nu_1}(x,y,\tilde \xi)$ is $D_{1,\tilde \xi} + \sum_{l=1}^k \nu_l(g_{\nu_1}(x,y,\tilde \xi))C_l$, it follows that
\begin{align} \label{eqn-no-name-0}
\sum_{l=1}^k m_{l1}(C_l,C_j) = \delta_{1j}\quad \text{for all}\ j,\ 1 \leq j \leq k,
\end{align}
where $\delta_{ij}$ is the usual Kronecker delta. Now fix $i$, $2 \leq i \leq k$ and pick $n \geq 0$ such that $x^n g_{\nu_i} \in \cc[x,y,\xi]$. For all $\tilde \xi \in \cc$, let $D_{i,\tilde \xi}$ be the closure in $\bar X$ of the curve $\{x^n g_{\nu_i}(x,y,\tilde \xi) = 0\} \subseteq X$. Let $Z_{i,\tilde \xi}$ be the image of $D_{i,\tilde \xi}$ under the natural birational morphism $\sigma_1: \bar X \to \bar X_1$. Note that
\begin{enumerate}
\item $\bar X_1 = \Xxy$ and $\sigma_1 = \sigmaxy$ in the notation of Section \ref{curvettection}.
\item $Z_{i,\tilde \xi}$ is precisely the curve $Z_{\tilde \xi}$ from Proposition \ref{curvette-equation} when applied to $\nu = \nu_i$.
\item $\sigma_1^{-1}(Q_y) \in C_1 \setminus\left( \bigcup_{j=2}^k C_k \right)$.
\end{enumerate}
Proposition \ref{curvette-equation} then implies that for generic $\tilde \xi \in \cc$,
\begin{align} \label{eqn-no-name-1}
(D_{i,\tilde \xi}, C_j) 
	&= -n\sum_{l=1}^k \nu_l(x)(C_l,C_j) + \sum_{l=1}^k m_{li}(C_l,C_j) 
	= \begin{cases}
		n 			&\text{if}\ j = 1 \\
		\delta_{ij}	&\text{if}\ 1 < j \leq k.
	\end{cases}
	= n \delta_{1j} + \delta_{ij}
\end{align}
Now recall that by our assumption $\nu_l(x) \leq \nu_l(y)$ for all $l$, $1 \leq l \leq k$. It follows that 
\begin{align*}
m_{l1} = -\nu_l(y - \tilde \xi x)\ \text{(where $\tilde \xi \in \cc$ is generic)} = -\nu_l(x), 
\end{align*}
which, together with identities \eqref{eqn-no-name-0} and \eqref{eqn-no-name-1} imply that 
\begin{align} \label{eqn-no-name-2}
\sum_{l=1}^k m_{li}(C_l,C_j) = \delta_{ij} \quad \text{for all}\ j,\ 1 \leq j \leq k.
\end{align}
The theorem now follows from identities \eqref{eqn-no-name-0} and \eqref{eqn-no-name-2}.
\end{proof}

\begin{example}[Minimal compactifications with maximal number of singular points] \label{sharp-example-2}
We apply Theorem \ref{intersection-thm} to construct, for each $k \geq 1$, minimal compactifications $\bar X_k$ of $X$ with $k$ irreducible curves at infinity and $|\sing(\bar X_k)| = k+1$. Choose relatively prime positive integers $p, q$. For $k = 1$, the weighted projective space $\pp^2(1,p,q)$ satisfies the requirement, provided both $p$ and $q$ are $\geq 2$. So assume $k \geq 2$. Pick distinct complex numbers $\alpha_2, \ldots, \alpha_{k+1}$ and for each $j$, $2 \leq j \leq k+1$, let $\nu_j$ be the divisorial valuation on $\cc(x,y)$ corresponding to generic descending Puiseux series $\tilde \psi_j(x,\xi) := \alpha_j x + \xi x^{-q/p}$; in other words, $\nu_j$ is the negative of the weighted degree on $\cc(x,y)$ with respect to coordinates $(x, y-\alpha_j x)$ such that the weight of $x$ is $p$ and the weight of $y - \alpha_j x$ is $-q$. The key forms of $\nu_j$ are $x,y,y-\alpha_jx$, and the generic key form of $\nu_j$ is
\begin{align*}
g_{\nu_j} = (y - \alpha_j x)^p - \xi x^{-q},\quad 2 \leq j \leq k+1.
\end{align*}
Let $\nu_1 = -\deg$ and $\bar X$ be the surface obtained by applying Theorem \ref{intersection-thm} to $\nu_1, \ldots, \nu_{k+1}$. Since $g_{\nu_1} = y - \xi x$, it follows that 
\begin{align*}
\scrM &= \begin{pmatrix}
1 & p & p & \cdots & p & p \\
p & -pq & p^2 & \cdots & p^2 & p^2 \\
p & p^2 & -pq & \cdots & p^2 & p^2 \\
\vdots & \vdots & \vdots & \cdots & \vdots & \vdots \\
p & p^2 & p^2 & \cdots & p^2 & -pq
\end{pmatrix}\\ 
\scrI &= \scrM^{-1} = \begin{pmatrix}
1-\frac{kp}{p+q}	& \frac{1}{p+q} 	& \frac{1}{p+q} & \cdots & \frac{1}{p+q} & \frac{1}{p+q} \\
\frac{1}{p+q} 		& -\frac{1}{p(p+q)} & 0 			& \cdots & 0 			 & 0 \\
\vdots 				& \vdots 			& \vdots 		& \cdots & \vdots 		 & \vdots \\
\frac{1}{p+q} 		& 0					& 0				& \cdots & 0			 & -\frac{1}{p(p+q)}
\end{pmatrix}
\end{align*}
Now assume $(k-1)p > q$. Then $(C_1, C_1) < 0$ and therefore $C_1$ is analytically contractible (Theorem \ref{greorem}); let $\bar X_{p,q}$ be the surface formed from $\bar X$ via contracting $C_1$. We claim that for a suitable choice of parameters $p$ and $q$, $\bar X_{p,q}$ is a minimal compactification of $X$ and $|\sing(\bar X_{p,q})| = k + 1$. Let $C'_j$ be the image of $C_j$ on $\bar X_{p,q}$ via the morphism $\pi': \bar X \to \bar X_{p,q}$. For the minimality of $\bar X_{p,q}$ it suffices to show that $(C'_j, C'_j) \geq 0$ for each $j$, $2 \leq j \leq k+1$. But $(C'_j, C'_j) = (\pi'^*(C'_j), \pi'^*(C'_j)) = (C_j + c_jC_1, C_j + c_jC_1)$, where $c_j = -(C_1, C_j)/(C_1,C_1)$. Consequently, 
\begin{align*}
(C'_j, C'_j) = (C_j + c_jC_1, C_j) = \frac{(C_1,C_1)(C_j,C_j) - (C_1,C_j)^2}{(C_1,C_1)} = \frac{1}{p(p+q)}\frac{q - (k-2)p}{(k-1)p - q}
\end{align*}
Since $(k-1)p > q$, it follows that $(C'_j, C'_j) \geq 0$ iff $(k-2)p \leq q$, i.e.\ $\bar X_{p,q}$ is indeed a minimal compactification of $X$ if $k \geq 2$ and $(k-2)p \leq q < (k-1)p$. \\

Now we compute $|\sing(\bar X_{p,q})|$. First note that for $2 \leq i < j \leq k+1$,
\begin{align*}
(C'_i, C'_j) = (C_i + c_iC_1, C_j + c_jC_1) = (C_i + c_iC_1, C_j) = c_i(C_1,C_j) = \frac{1}{(p+q)((k-1)p - q)};
\end{align*}
in particular, $(C'_i,C'_j)$ is {\em not} an integer, which implies that the (unique) point $P'$ of intersection of $C'_i$ and $C'_j$ (which is also the point of intersection of {\em all} $C'_l$, $2 \leq l \leq k+1$, due to assertion \ref{minimal-configuration} of Theorem \ref{singular-theorem}) is singular. To see other singular points of $\bar X_{p,q}$, note that for each $j$, $1 \leq j \leq k$, there is a morphism $\pi_j: \bar X \to \bar X_{p,q,j}$, where $\bar X_{p,q,j}$ is the surface obtained from $\bar X$ by contracting all curves at infinity other than $C_1$ and $C_j$. Since $-\nu_1$ and $-\nu_j$ are weighted degrees in $(x, y - \alpha_j x)$-coordinates, it follows that $\bar X_{p,q,j}$ is the toric surface corresponding to the polygon of figure \ref{fig:bar-X-p-q-j}. It follows from basic toric geometry that if $p \geq 2$, then $\bar X_{p,q,j}$ has a singular point $Q_j$ on $\pi_j(C_j) \setminus \pi_j(C_1)$. Since $\pi_j$ is invertible near $P_j$, it then follows that $P_j := \pi_j^{-1}(Q_j)$ is a singular point on $C_j \setminus C_1$ and consequently the image $P'_j$ of $P_j$ on $\bar X_{p,q}$ is a singular point on $C'_j \setminus \bigcup_{i\neq j}C_i$. 

\begin{figure}[htp]
\begin{center}
\begin{tikzpicture}[scale=0.25]
	\begin{scope}[shift={(0,0)}]
		\draw [gray,  line width=0pt] (0,0) grid (10,10);
		\draw [<->] (0,10.5) |- (10.5,0);
		
		\coordinate (x) at (2,0);
		\coordinate (z) at (6,3);
		\coordinate (y) at (0,9);
				
		\draw[thick] (0,0) -- (x) -- (z) -- (y) -- cycle;		
		
		\draw (4.5,1.5) node (slope1) [right, text width=2cm]{slope $p/q$};
		\draw (3.5,5.5) node (slope2) [right, text width=2cm]{slope $-1$};

	\end{scope}
\end{tikzpicture}
\caption{Polygon corresponding to $\bar X_{p,q,j}$}  \label{fig:bar-X-p-q-j}
\end{center}
\end{figure}
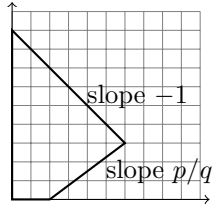 
\end{example}

\begin{proof}[Proof of Theorem \ref{determinanthm}]
W.l.o.g.\ we may (and will) assume that no two $\nu_j$'s are mutually proportional. We divide the proof in two cases:\\

{\bf Case 1: there exists $j$, $1 \leq j \leq k$, such that $\nu_j = - \deg$.} In this case w.l.o.g.\ we may assume $j = 1$ and Theorem \ref{intersection-thm} shows that the answer is affirmative. So we only have to show that $\det(- M) < 0$. Indeed, let $\scrI$ be the intersection matrix of the curves at infinity on $\bar X$ and $\tilde \scrI$ be the $(k-1) \times (k-1)$ submatrix of $\scrI$ with $(i,j)$-th entry being $(C_i, C_j)$, $2 \leq j \leq k$. Since $C_2 \cup \cdots \cup C_k$ is contractible, Grauert's theorem (Theorem \ref{greorem}) implies that $\tilde \scrI$ is negative definite. Similarly, since $C_1 \cup \cdots \cup C_k$ is {\em not} contractible, it follows that $\scrI$ is not negative definite. Since $\det(\scrI) \neq 0$, it then follows from the standard test of negative-definiteness via the sign of principal minors that $(-1)^{k}\det \scrI < 0$. Consequently, $(-1)^{k}\det \scrM = \det(-\scrM) < 0$, as required. \\

{\bf Case 2: there is no $j$, $1 \leq j \leq k$, such that $\nu_j = - \deg$.} In this case, let $\nu_0 = -\deg$ and apply Theorem \ref{intersection-thm} to the collection $\nu_0, \ldots, \nu_k$. Let $\bar X'$ be the resulting compactification of $\cc^2$ and $\scrI'$ be the matrix of intersection numbers of curves $(C'_i,C'_j)$, where $C'_i$ is the curve at infinity on $\bar X'$ corresponding to $\nu_i$, $0 \leq i \leq k$. Theorem \ref{intersection-thm} implies that $\det \scrM$ is precisely the $(1,1)$-minor of $\scrM' := \scrI'^{-1}$. Cramer's rule then implies that $(C'_0,C'_0) = \det \scrM/\det \scrM'$. On the other hand, applying Case 1 to $\nu_0, \ldots, \nu_k$ yields that $\sign(\det \scrM') = (-1)^k$. Consequently, $\sign((C'_0,C'_0)) = \sign((-1)^k\det \scrM) = \sign(\det(-\scrM))$. Now the result follows from Grauert's theorem.
\end{proof}

As an application of Theorem \ref{determinanthm}, we give an interpretation of {\em skewness} of valuations - an invariant of valuations defined by Favre and Jonsson in order to study the {\em valuative tree} (see \cite{favsson-tree} for details). 

\begin{defn}[see {\cite[Appendix A]{favsson-eigen}}]
Let $\nu$ be a divisorial discrete valuation on $\cc[x,y]$ centered at infinity such that $\nu(x) < 0$ and $\nu(x) \leq \nu(y)$. Assume that $\nu \neq -\deg$, where $\deg$ is the degree in $(x,y)$-coordinates. Let $P$ be the center of $\nu$ on $\Xxy \cong \pp^2$. For every $f \in \sheaf_{\Xxy, P}$, let $\tilde m(f)$ be the intersection multiplicity at $P$ of the curve $\{f = 0\}$ with the line at infinity. Note that $u:= 1/x$ is a regular function at $P$ and $u = 0$ is precisely the equation of the line at infinity near $P$. Let $\tilde \nu := \nu/\nu(u)$ be the {\em normalized version} of $\nu$ (in the sense that $\tilde \nu(u) = 1$). Then the {\em relative skewness} of $\nu$ is $\tilde \alpha (\nu) := \sup\{\tilde \nu(f)/\tilde m(f): f \in \sheaf_{\Xxy, P}\}$ and the {\em skewness} of $\nu$ is $\alpha(\nu) := 1 - \tilde \alpha(\nu)$.\footnote{In \cite[Appendix A]{favsson-eigen} skewness was defined only for normalized valuations centered at infinity. We simply defined the skewness of a valuation centered at infinity to be the skewness of its normalized version.}
\end{defn}

\begin{cor} \label{cor-primitive-existence}
Let $\nu$ be a divisorial discrete valuation on $\cc(x,y)$ centered at infinity such that $\nu(x) < 0$ and $\nu(x) \leq \nu(y)$. Let $g_\nu$ be the generic key form of $\nu$ with respect to $(x,y)$-coordinates. Then the following are equivalent:
\begin{compactenum}
\item \label{primitive-valuation} $\nu$ determines a compactification of $X$ (i.e.\ there is a (unique) compactification $\bar X$ of $X$ such that the curve $C$ at infinity on $\bar X$ is irreducible and $\nu$ is the order of vanishing along $C$).
\item \label{generically-negative} $\nu(g_\nu(x,y,\tilde \xi)) < 0$ for some (and hence every!) $\tilde \xi \in \cc$. 
\item \label{positively-skewed} $\alpha(\nu) > 0$.
\end{compactenum}
\end{cor}

\begin{proof}
Let $p := \deg_y(g_\nu)$. Recall (from Definition \ref{key-definition}) that $g_\nu = \tilde g_\nu/u^p = \frac{1}{u^p}(\tilde g_l^{n_l} - \xi \prod_{j=0}^{l-1} \tilde g_j^{n_j})$, where $\tilde g_j$'s are key polynomials of $\nu$ with respect to $(u,v) := (1/x,y/x)$-coordinates. The defining properties of key polynomials then imply that  
\begin{align}
&p = n_l\deg_v(\tilde g_l) = \nu(u),\ \text{and for all $\tilde \xi \in \cc$},\\
&\nu(g_\nu(x,y,\tilde \xi)) 
	= n_l\nu(\tilde g_l) - \nu(u^p) 
	= n_l\nu(\tilde g_l) - p\nu(u)
	= n_l(\nu(\tilde g_l) - \nu(u)\deg_v(\tilde g_l)).
\end{align}
In particular, $\nu(g_\nu(x,y,\tilde \xi))$ does not depend on $\tilde \xi$. The equivalence of assertions \ref{primitive-valuation} and \ref{generically-negative} then immediately follows from the $k=1$ case of Theorem \ref{determinanthm}. On the other hand, \cite[Lemma 3.32]{favsson-tree} implies that 
\begin{align*}
\tilde \alpha(\nu) = \frac{\tilde \nu(\tilde g_l)}{\tilde m(\tilde g_l)} = \frac{\nu(\tilde g_l)}{\nu(u)\deg_v(\tilde g_l)}
\end{align*}
It follows that 
\begin{align*} 
\alpha(\nu) 
	&= 1 - \tilde \alpha(\nu) 
	= \frac{\nu(u)\deg_v(\tilde g_l) - \nu(\tilde g_l)}{\nu(u)\deg_v(\tilde g_l)}
	= - \frac{\nu(g_\nu(x,y,\tilde \xi))}{p\nu(u)}
	= - \frac{\nu(g_\nu(x,y,\tilde \xi))}{p^2}
\end{align*}	
which shows the equivalence of assertions \ref{generically-negative} and \ref{positively-skewed}, and completes the proof of the corollary.
\end{proof}

\begin{rem} 
The term $\nu(g_\nu(x,y,\tilde \xi))$ from assertion \eqref{generically-negative} of Corollary \ref{cor-primitive-existence}, or equivalently the skewness $\alpha(\nu)$ can be calculated in a straightforward way in terms of formal Puiseux pairs of the generic descending Puiseux series $\psi_\nu(x,\xi)$ of $\nu$. We present the formula for the sake of completion: let $(q_1, p_1), \ldots, (q_{l+1}, p_{l+1})$ be the formal Puiseux pairs of $\psi_\nu$. Set $p := p_1 \cdots p_{l+1}$. Then for every $\tilde \xi \in \cc$,
\begin{align*} 	
\nu(g_\nu(x,y,\tilde \xi)) 	
	&= -p \left( (p_1 \cdots p_{l+1} - p_2 \cdots p_{l+1}) \frac{q_1}{p_1} + (p_2 \cdots p_{l+1} - p_3 \cdots p_{l+1}) \frac{q_2}{p_1p_2} \right. \\
	& \quad \qquad \left. + \cdots + (p_l p_{l+1} - p_{l+1}) \frac{q_l}{p_1 \cdots p_l} + p_{l+1} \frac{q_{l+1}}{p_1 \cdots p_{l+1}} \right)
\end{align*}	
\end{rem}

\bibliographystyle{alpha}
\bibliography{bibi}
%

\end{document}